% %
% % Version of 11.1.2024
% %
% % Last changes by DR
% %
\documentclass[11pt]{article}
\usepackage[latin1]{inputenc}
\usepackage{amsmath,amsthm,amssymb}
\usepackage{amsfonts}
\usepackage{amsmath,amsthm,amssymb,amscd}
\usepackage{latexsym}
\usepackage{color}
\usepackage{graphicx}
\usepackage{mathrsfs}
 \usepackage{cite}

\textwidth172mm \textheight22cm \hoffset-24mm \voffset-20mm

\makeatletter \@addtoreset{equation}{section} \makeatother

\setlength{\parindent}{1em}

\newtheorem{theorem}{Theorem}[section]

\newtheorem{proposition}{Proposition}[section]
\newtheorem{lemma}{Lemma}[section]

\begin{document}

\title{\sc On the \emph{p}-fractional Schr\"{o}dinger-Kirchhoff equations 
with electromagnetic fields and the Hardy-Littlewood-Sobolev nonlinearity}

\author{\sc {Min Zhao$^{a}$,   Yueqiang
Song$^{a}$\thanks{{{Corresponding author}}} and Du\v{s}an D.
Repov\v{s}$^{b}$}\thanks{{{{\it E-mail addresses:} ywx7529@163.com
(M. Zhao), songyq16@mails.jlu.edu.cn (Y. Song),
dusan.repovs@guest.arnes.si (D.D. Repov\v{s})}}}\\
$^{\small\mbox{a}}${\small College of Mathematics, Changchun Normal
University,  Changchun, 130032,  P.R. China}\\[-0.2cm]
$^{\small\mbox{b}}$ {\small Faculty of Education and Faculty of
Mathematics and Physics, University of Ljubljana,} \\[-0.2cm]
{\small  \& Institute of Mathematics,
Physics and Mechanics, Ljubljana,  1000, Slovenia}}
\date{}
\maketitle

  \begin{abstract}
{In this article, we deal with the following $p$-fractional
Schr\"{o}dinger-Kirchhoff equations with electromagnetic fields and
the Hardy-Littlewood-Sobolev nonlinearity:
$$
M\left([u]_{s,A}^{p}\right)(-\Delta)_{p, A}^{s} u+V(x)|u|^{p-2}
u=\lambda\left(\int_{\mathbb{R}^{N}} \frac{|u|^{p_{\mu,
s}^{*}}}{|x-y|^{\mu}} \mathrm{d}y\right)|u|^{p_{\mu, s}^{*}-2}
u+k|u|^{q-2}u,\ x \in \mathbb{R}^{N},$$ where  $0<s<1<\emph{p}$, $ps
< N$, $\emph{p}<\emph{q}<2p^{*}_{s,\mu}$, $0<\mu<N$, $\lambda$ and
$k$ are some  positive parameters,
$\emph{p}^{*}_{s,\mu}=\frac{\emph{p}N-\emph{p}\frac{\mu}{2}}{N-\emph{p}s}$
is the critical exponent with respect to the
Hardy-Littlewood-Sobolev inequality, and functions $V$, $M$ satisfy
the suitable conditions. By proving  the compactness results with
the help of the fractional version of concentration compactness
principle, we establish the existence of nontrivial solutions to
this problem.}

\medskip

\smallskip\noindent
{\bf Keywords:}  Hardy-Littlewood-Sobolev nonlinearity;
Schr\"{o}dinger-Kirchhoff equations;  Variational methods;
Electromagnetic fields. \medskip

\smallskip\noindent{\bf 2020 Mathematics Subject Classification:} 35J10; 35B99; 35J60; 47G20.
 \medskip
\end{abstract} 

\maketitle

\section{Introduction}\label{s1}
In this article,  we  intend to study the following
\emph{p}-fractional Schr\"{o}dinger-Kirchhoff equations with
electromagnetic fields and the Hardy-Littlewood-Sobolev nonlinearity
in $\mathbb{R}^{N}$
\begin{align}\label{e1.1}
M\left([u]_{s,A}^{p}\right)(-\Delta)_{p, A}^{s} u+V(x)|u|^{p-2}
u=\lambda\left(\int_{\mathbb{R}^{N}} \frac{|u|^{p_{\mu,
s}^{*}}}{|x-y|^{\mu}} \mathrm{d} y\right)|u|^{p_{\mu, s}^{*}-2}
u+k|u|^{q-2}u,\quad  x \in \mathbb{R}^{N},
\end{align}
where $0<s<1<\emph{p}$, $ps < N$,
$\emph{p}<\emph{q}<2p^{*}_{s,\mu}$, $0<\mu<N$, $\lambda$ and $k$ are
some  positive parameters,
\begin{equation*}
[u]_{s, A}^{p}:=\iint_{\mathbb{R}^{2 N}}
\frac{\left|u(x)-\mathrm{e}^{i(x-y) \cdot A(\frac{x+y}{p})}
u(y)\right|^{p}}{|x-y|^{N+p s}}\mathrm{~d}x\mathrm{~d}y,
\end{equation*}
$\emph{p}^{*}_{s,\mu}=\frac{\emph{p}N-\emph{p}\frac{\mu}{2}}{N-\emph{p}s}$
is the critical exponent with respect to the
Hardy-Littlewood-Sobolev inequality, $V\in C(\mathbb{R}^{N},
\mathbb{R}_{0}^{+})$ is an electric potential, $A\in
C(\mathbb{R}^{N}, \mathbb{R}^{N})$ is a magnetic potential, and $V$,
$M$ satisfy the following assumptions
\begin{itemize}
\item[$(V)$] $V:\mathbb{R}^{N} \rightarrow \mathbb{R}$ is a continuous function and has critical frequency, that is, $V(0) = \min
_{x\in\mathbb{R}^{N}}V(x)=0$. Moreover, the set
$V^{\tau_{0}}=\left\{x\in\mathbb{R}^{N}:V(x) <\tau_{0}\right\}$ has
finite Lebesgue measure for some $\tau_{0}>0$.

\item[$(M)$] $\left(m_{1}\right)$ The Kirchhoff function $M:\mathbb{R}_{0}^{+}\rightarrow\mathbb{R}^{+}$ is a  continuous and nondecreasing.
In addition, there exists a positive constant  $m_{0}>0$ such that
$M(t)\geq m_{0}$ for all $t\in\mathbb{R}_{0}^{+}$;

$\left(m_{2}\right)$ For some  $\sigma\in$ $\left(p /q, 1\right],$
we have $\widetilde{M}(t)\geq\sigma M(t)t$ for all $t\geq0$, where
$\widetilde{M}(t)=\int_{0}^{t}M(s)ds$.
\end{itemize}

When $p=2$, we know that  the fractional operator $(-\Delta)_{A}^s$,
which up to normalization constants, can be defined on smooth
functions $u$ as
\begin{eqnarray*}
(-\Delta)_{A}^s u(x) := 2\lim_{\varepsilon \rightarrow 0}
\int_{\mathbb{R}^N \setminus
B_\varepsilon(x)}\frac{u(x)-e^{i(x-y)\cdot
A(\frac{x+y}{2})}u(y)}{|x-y|^{N+2s}}dy, \quad x\in  \mathbb{R}^N,
\end{eqnarray*}
see d'Avenia and Squassina \cite{da1}. There already exist  several
papers dedicated to the study of the Choquard equation, this problem
can be used to describe many physical models \cite{la, pek}.
Recently, d'Avenia and Squassina \cite{da1} considered the following
fractional Choquard equation of the form
\begin{equation}\label{e1.2}
(-\Delta)^su+ \omega u =
\left(\mathcal{K}_\mu*|u|^{p}\right)|u|^{p-2}u, \quad u \in
H^s(\mathbb{R}^N), \quad N\geq 3,
\end{equation}
and the existence of ground state solutions was obtained by using
the Mountain pass theorem and the Ekeland variational principle. For
more results on
 problems with the Hardy-Littlewood-Sobolev
nonlinearity without the magnetic operator case, see  \cite{cas, ma,
pa2, ch,  so1, so2}.

For the case $p \neq 2$,   Iannizzotto  et al. \cite{a11}
investigated the  following fractional \emph{p}-Laplacian equation
\begin{eqnarray}\label{e1.3}
\left\{\begin{array}{lll}
(-\Delta)_{p}^{s}u=f(x,u)  \quad &\text {in}\quad    \Omega,\\
  u=0             \quad &\text {in}\quad    \mathbb{R}^{N}\backslash\Omega.
\end{array}\right.
\end{eqnarray}
The existence and multiple solutions for problem \eqref{e1.3} was
proved by using the Morse theory.   Xiang et al. \cite{a12} dealt
with a class of Kirchhoff-type problems driven by nonlocal elliptic
integro-differential operators, and two existence theorems were
obtained with the help of the variational method.  Souza \cite{a13}
studied a class of nonhomogeneous fractional quasilinear equations
in $\mathbb{R}^{N}$ with exponential growth of the form
\begin{equation}
\begin{aligned}\label{4''}
(-\bigtriangleup)_{p}^{s}u+V(x)|u|^{p-2}u=f(x,u)+\lambda h \quad
\text {in}\ \Omega.
\end{aligned}
\end{equation}
By using a suitable Trudinger-Moser inequality for fractional
Sobolev spaces, they  established the existence of weak solutions
for  problem \eqref{4''}. In particular, Nyamoradi and Razani
\cite{ny} considered a class of new Kirchhoff-type equations
involving the fractional $p$-Laplacian and Hardy-Littlewood-Sobolev
critical nonlinearity. The existence of infinitely many solutions
was obtained   by using the concentration compactness principle and
Krasnoselskii's genus theory.  For more recent advances on this kind
of problems, we refer the readers to \cite{a6, a14, a27, Li1, Li2,
LS, liang5, liang3, a15, a21, mu1, x1, zhang2, ho, pa}.

On the other hand, one of the main features of problem \eqref{e1.1}
is the presence of the magnetic field  operator $A$. When $A \neq
0$, some authors have studied the following equation
\begin{equation}\label{e1.5}
-(\nabla u-{\rm i} A)^2u + V(x)u = f(x, |u|)u,
\end{equation}
which has appeared in recent years, where the  magnetic operator in
equation \eqref{e1.5} is given by
\begin{displaymath}
-(\nabla u-{\rm i} A)^2u = -\Delta u +2iA(x)\cdot\nabla u +
|A(x)|^2u + iu \mbox{div} A(x).
\end{displaymath}
Squassina and Volzone \cite{sq1} proved that up to correcting the
operator by the factor $(1-s)$, it follows that $(-\Delta)^s_A u$
converges to $-(\nabla u-{\rm i} A)^2u$ as $s\rightarrow1$. Thus, up
to normalization, the nonlocal case can be seen as an approximation
of the local one.

Recently, many researchers have paid attention to the problems  with
fractional magnetic operator. In particular, Mingqi et al.
\cite{MPSZ}  proved some existence results for the following
Schr$\ddot{\mbox{o}}$dinger--Kirchhoff type equation involving the
magnetic operator
\begin{equation}\label{e1.6}
M([u]_{s,A}^2)(-\Delta)_A^su+V(x)u=f(x,|u|)u\quad \text{in
$\mathbb{R}^N$},
\end{equation}
where $f$ satisfies the subcritical growth condition. For the
critical growth case, Binlin et al.  \cite{a9} considered the
following fractional Schr\"{o}dinger-equation with critical
frequency and critical growth
\begin{equation}\begin{aligned}\label{5'}
 \varepsilon^{2s}(-\vartriangle)^{s}_{A_{\varepsilon}}u+V(x)u=f(x,|u|)u+K(x)|u|^{2_{\alpha}^{*}-2}u\quad \text {in}\quad \mathbb{R}^{N}.
 \end{aligned}
 \end{equation}
The existence of ground state solution tending to trivial solution
as $\varepsilon\rightarrow0$ was obtained by using variational
method. Furthermore,  Song and Shi \cite{a24} were concerned with a
class of the \emph{p}-fractional Schr\"{o}dinger-Kirchhoff equations
with electromagnetic fields, under suitable additional assumptions,
the existence of infinite solutions was obtained by using the
variational method. More results about fractional equations
involving the Hardy-Littlewood-Sobolev and critical nonlinear and
can be found in \cite{liang4, WX, xia,  zhang3}.

Inspired by the above works,  in this paper, we are interested in
the  \emph{p}-fractional Schr\"{o}dinger-Kirchhoff equations with
electromagnetic fields and the Hardy-Littlewood-Sobolev
nonlinearity. As far as we know, there have not been any results for
problem \eqref{e1.1} yet. We note that there are many difficulties
in dealing with such problems due to the presence of the
electromagnetic field and critical nonlinearity. In order to
overcome these difficulties, we shall adopt the
concentration-compactness principles and some new techniques to
prove the $(PS)_{c}$ condition. Moreover, we shall use the variational
methods in order to establish the existence and multiplicity of
solutions for problem \eqref{e1.1}. Here are our main results.
\begin{theorem}\label{the1.1}
Suppose that conditions $(V)$ and $(M)$  are satisfied. Then  there exists
$\lambda^\ast > 0$ such that if $\lambda
> \lambda^\ast >0$, then there exists
at least one solution $u_{\lambda}$ of problem \eqref{e1.1} and
$u_{\lambda}\rightarrow 0$ as $\lambda \rightarrow \infty$.
\end{theorem}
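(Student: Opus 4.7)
The plan is to recast problem \eqref{e1.1} variationally on the real Banach space
\[
W = \Bigl\{u \in W^{s,p}_{A}(\mathbb{R}^{N}) : \int_{\mathbb{R}^{N}} V(x)|u|^{p}\,dx < \infty\Bigr\},
\]
endowed with the norm $\|u\|^{p} = [u]_{s,A}^{p} + \int_{\mathbb{R}^{N}} V(x)|u|^{p}\,dx$, and apply the Mountain Pass Theorem to
\[
J_{\lambda}(u) = \tfrac{1}{p}\widetilde{M}\bigl([u]_{s,A}^{p}\bigr) + \tfrac{1}{p}\int V|u|^{p} - \tfrac{\lambda}{2p^{*}_{s,\mu}}\iint \frac{|u(x)|^{p^{*}_{s,\mu}}|u(y)|^{p^{*}_{s,\mu}}}{|x-y|^{\mu}}\,dx\,dy - \tfrac{k}{q}\int|u|^{q}.
\]
Because of $(V)$ (critical frequency and finite-measure sublevel set), a standard Bartsch--Wang argument combined with the diamagnetic inequality yields the compact embedding $W \hookrightarrow L^{r}(\mathbb{R}^{N})$ for every $r \in [p, p^{*}_{s})$, while the continuous embedding into $L^{p^{*}_{s}}$ persists at the critical exponent; together with the Hardy--Littlewood--Sobolev inequality this makes $J_{\lambda}$ well-defined and $C^{1}$.

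First I would verify the mountain pass geometry. The lower bound $M(t) \geq m_{0}$ from $(m_{1})$, the HLS inequality and the Sobolev-type embeddings yield $J_{\lambda}(u) \geq \alpha > 0$ on a sphere $\|u\| = \rho$ for sufficiently small $\rho$, uniformly in $\lambda \geq 1$; since $2p^{*}_{s,\mu} > q > p$, for any fixed $v_{0} \in W \setminus\{0\}$ one has $J_{\lambda}(tv_{0}) \to -\infty$ as $t \to \infty$, giving the mountain and the associated minimax level $c_{\lambda}$. For Palais--Smale boundedness, I would compute $J_{\lambda}(u_{n}) - \frac{1}{q}\langle J'_{\lambda}(u_{n}), u_{n}\rangle$ and use assumption $(m_{2})$ in the form $\widetilde{M}(t)/p - M(t)t/q \geq (\sigma/p - 1/q)\,M(t)t \geq m_{0}(\sigma/p - 1/q)\,t$, together with the sign of $1/q - 1/(2p^{*}_{s,\mu})$; both coefficients are positive exactly because $\sigma > p/q$ and $q < 2p^{*}_{s,\mu}$.

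The central step is the local $(PS)_{c_{\lambda}}$ condition. To upgrade the weak limit $u_{n} \rightharpoonup u_{\lambda}$ to strong convergence in $W$, I would invoke the fractional magnetic concentration--compactness principle (in the spirit of Lions, extended to the HLS--$p$-fractional setting by Mukherjee--Sreenadh and Binlin et al.): the defect measures associated with $[u_{n}]_{s,A}^{p}$ and with the Choquard integrand can only load atoms $\{x_{i}\}$, at which the best HLS--Sobolev constant $S_{H,L}$ forces $\nu_{i} \geq (S_{H,L}/\lambda)^{(N-\mu/2)/(ps-\mu/2)}\,\mu_{i}^{\,p^{*}_{s,\mu}/p}$ and $\mu_{i} \geq \lambda^{-(N-ps)/(ps-\mu/2)}S_{H,L}^{N/(ps-\mu/2)}$. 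Converting one such atom into an energy cost using $(m_{2})$ gives a threshold $c^{\ast}(\lambda) = \kappa\,\lambda^{-(N-ps)/(ps-\mu/2)}$ with $\kappa > 0$ depending only on $m_{0}, \sigma, p, s, \mu, N, S_{H,L}$; whenever $c_{\lambda} < c^{\ast}(\lambda)$ no atom can appear, the Choquard term passes to the limit by Brezis--Lieb for the Riesz potential, and $u_{n} \to u_{\lambda}$ strongly in $W$.

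It remains to push $c_{\lambda}$ below $c^{\ast}(\lambda)$ for large $\lambda$. Fixing $v_{0} \in W\setminus\{0\}$, elementary analysis of $t \mapsto J_{\lambda}(tv_{0})$ shows its maximiser $t_{\lambda}$ satisfies $t_{\lambda} \to 0$ and $\sup_{t \geq 0}J_{\lambda}(tv_{0}) \to 0$ as $\lambda \to \infty$; since $c_{\lambda} \leq \sup_{t \geq 0}J_{\lambda}(tv_{0})$, one concludes $c_{\lambda} \to 0$ faster than $c^{\ast}(\lambda)$ by comparing exponents in $\lambda$. Hence there exists $\lambda^{\ast} > 0$ such that for all $\lambda > \lambda^{\ast}$ the local $(PS)_{c_{\lambda}}$ condition holds and the Mountain Pass Theorem produces a nontrivial critical point $u_{\lambda}$. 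Finally, from $J_{\lambda}(u_{\lambda}) = c_{\lambda} \to 0$ together with the identity $\langle J'_{\lambda}(u_{\lambda}), u_{\lambda}\rangle = 0$ and $(m_{1})$--$(m_{2})$ one extracts $\|u_{\lambda}\| \to 0$. The main obstacle is the $(PS)_{c}$ analysis: adapting the concentration--compactness principle to the magnetic $p$-fractional setting where the diamagnetic inequality is only one-sided and the Kirchhoff prefactor $M([u_{n}]_{s,A}^{p})$ varies along the sequence, and tracking the threshold $c^{\ast}(\lambda)$ with enough precision to compare it against the mountain pass level.
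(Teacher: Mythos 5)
Your overall architecture matches the paper's: mountain pass geometry, boundedness of $(PS)$ sequences via $(m_2)$, a local $(PS)_c$ condition below a $\lambda$-dependent threshold obtained from the fractional concentration--compactness principle plus the diamagnetic inequality and Br\'ezis--Lieb, and finally an energy estimate forcing the minimax level below that threshold. However, there is a genuine gap in the last step. The compactness threshold has the form $c^{\ast}(\lambda)=C^{\ast}\lambda^{-p/(2p^{*}_{s,\mu}-p)}$ with a \emph{fixed} constant $C^{\ast}=\bigl(\tfrac1q-\tfrac{1}{2p^{*}_{s,\mu}}\bigr)(m_0S_{p,H})^{2p^{*}_{s,\mu}/(2p^{*}_{s,\mu}-p)}$. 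For a \emph{fixed} $v_0$, maximizing $t\mapsto J_\lambda(tv_0)$ by balancing the $t^{p}$ (or $t^{p/\sigma}$) term against $\lambda t^{2p^{*}_{s,\mu}}$ gives $t_\lambda\sim\lambda^{-1/(2p^{*}_{s,\mu}-p)}$ and hence $\sup_{t\ge0}J_\lambda(tv_0)\sim C(v_0)\,\lambda^{-p/(2p^{*}_{s,\mu}-p)}$: the \emph{same} power of $\lambda$ as the threshold, not a faster one. So "comparing exponents in $\lambda$" cannot close the argument; everything hinges on comparing the constants $C(v_0)$ and $C^{\ast}$, and for an arbitrary fixed $v_0$ there is no reason to have $C(v_0)<C^{\ast}$.

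The paper resolves exactly this point in Lemmas \ref{lem4.2}--\ref{lem4.3} by using $\lambda$-dependent concentrating test functions $\psi_{\lambda,\zeta}(x)=e^{iA(0)x}\phi_\zeta(\lambda^{-\tau}x)$ with $\tau$ as in \eqref{e4.4}, where $\phi_\zeta$ is chosen with $|\phi_\zeta|_q=1$ and $[\phi_\zeta]^{p}_{s,0}\le C\zeta^{(pN-(N-ps)q)/q}$ (possible because $\inf\{[\phi]^p_{s,0}:|\phi|_q=1\}=0$ for subcritical $q$), and where the critical frequency $V(0)=0$ makes $\int V(\lambda^\tau x)|\phi_\zeta|^p$ small for large $\lambda$. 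After rescaling, this yields $\max_{t\ge0}J_\lambda(t\psi_{\lambda,\zeta})\le\kappa\,\lambda^{-p/(2p^{*}_{s,\mu}-p)}$ with $\kappa$ as small as desired, in particular $\kappa<C^{\ast}$, which is what makes the mountain pass level admissible. Your proposal is missing this construction (and the role of $V(0)=0$ and of the normalization $|\phi_\zeta|_q=1$ in it), so as written the existence claim for large $\lambda$ does not follow. The remaining ingredients of your outline (compact embedding from $(V)$, the $(PS)$ boundedness computation, the atom dichotomy, and the decay $\|u_\lambda\|\to0$ from $c_\lambda\to0$) are consistent with the paper's argument, modulo a wrong exponent $(N-ps)/(ps-\mu/2)$ where $p/(2p^{*}_{s,\mu}-p)=(N-ps)/(N-\mu+ps)$ is intended.
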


\begin{theorem}\label{the1.2}
Suppose that conditions $(V)$ and $(M)$  are satisfied. Then   for any $m\in
\mathbb{N}$, there exists $\lambda_{m}^\ast>0$ such that if $\lambda >
\lambda_{m}^\ast$, then problem \eqref{e1.1} has at least $m$ pairs of
solutions $u_{\lambda,i},u_{\lambda,-i},i=1,2,\cdots,m$ and
$u_{\lambda,\pm i} \rightarrow 0$ as $\lambda \rightarrow \infty$.
\end{theorem}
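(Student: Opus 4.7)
The plan is to extend the variational framework already established for Theorem \ref{the1.1} and exploit the $\mathbb{Z}_2$-symmetry of the associated energy functional $I_\lambda$ on the appropriate weighted fractional magnetic Sobolev space $E$. Since both the Kirchhoff term $\widetilde{M}([u]_{s,A}^p)/p$, the potential term $\int V(x)|u|^p$, the Choquard term $\iint |u|^{p_{\mu,s}^*}|u|^{p_{\mu,s}^*}/|x-y|^\mu$, and the purely local term $\int |u|^q$ all depend on $u$ only through $|u|$, the functional $I_\lambda$ is even with $I_\lambda(0)=0$. Therefore the natural tool is a symmetric minimax principle: concretely I would apply the symmetric mountain pass theorem of Ambrosetti--Rabinowitz (or equivalently an argument based on the Krasnoselskii genus, as in Nyamoradi--Razani \cite{ny}) to obtain an unbounded sequence of critical values $c_{\lambda,k}$ provided the $(PS)_{c}$ condition holds below a suitable threshold.

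The first step is to verify the symmetric geometry. For each fixed $m\in\mathbb{N}$, I would choose an $m$-dimensional subspace $W_m\subset E$ consisting of smooth compactly supported functions. On such a finite-dimensional subspace all norms are equivalent, and since the critical Choquard term dominates as $\|u\|\to\infty$ while $p<q<2p_{\mu,s}^*$ controls the behavior of the subcritical term, one obtains $I_\lambda(u)\to-\infty$ as $\|u\|\to\infty$ uniformly on $W_m$. Hence $\sup_{u\in W_m} I_\lambda(u)$ is attained on a ball $B_{R_m}$ and, crucially, by choosing $\lambda$ large, the coefficient $\lambda$ in front of the critical term pushes the maximum value $c_{\lambda,m}:=\inf_{h\in\Gamma_m}\max_{u\in W_m\cap B_{R_m}} I_\lambda(h(u))$ below the critical threshold dictated by the fractional Hardy--Littlewood--Sobolev best constant. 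Here I would use the explicit scaling behavior of the truncated Talenti-type extremizers together with condition $(m_2)$ to pin down the asymptotic bound $c_{\lambda,m}\to 0$ as $\lambda\to\infty$.

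The main obstacle is verifying the $(PS)_{c_{\lambda,m}}$ condition in the presence of the Hardy--Littlewood--Sobolev critical nonlinearity and the magnetic term. Boundedness of Palais--Smale sequences follows in the standard way from the Kirchhoff condition $(m_2)$ and the fact that $\sigma q>p$. The loss of compactness at the critical level is the hard point: I would invoke the fractional magnetic version of the concentration--compactness principle (in the spirit of \cite{MPSZ,a9,a24,ny}) to rule out both vanishing and nontrivial concentration at points or at infinity. The potential $V$ with finite-measure sublevel sets from $(V)$ controls mass escaping to infinity, while the threshold computed in the previous step prevents concentration: whenever $c<c^*(\lambda)$, the concentration measures associated with a $(PS)_c$ sequence must be trivial, giving strong convergence in $E$ to a critical point.

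Finally, to produce $m$ \emph{pairs} of solutions I would apply the genus-based minimax theorem to the even functional $I_\lambda$, obtaining critical values $c_{\lambda,1}\le\cdots\le c_{\lambda,m}$, each below the $(PS)$ threshold once $\lambda>\lambda_m^*$ for a suitable $\lambda_m^*$. Oddness of the corresponding critical points $u_{\lambda,i}$ comes from the $\mathbb{Z}_2$-symmetry, giving the pairs $(u_{\lambda,i},u_{\lambda,-i})=(u_{\lambda,i},-u_{\lambda,i})$. The convergence $u_{\lambda,\pm i}\to 0$ as $\lambda\to\infty$ follows by combining the coercivity of $I_\lambda$ on bounded sets with the estimate $I_\lambda(u_{\lambda,i})=c_{\lambda,i}\to 0$: the Kirchhoff hypothesis $M(t)\ge m_0$ and $(m_2)$ yield $[u_{\lambda,i}]_{s,A}^p+\int V|u_{\lambda,i}|^p \lesssim c_{\lambda,i}\to 0$, so $u_{\lambda,\pm i}\to 0$ in $E$, which is the desired decay.
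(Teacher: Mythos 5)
Your proposal follows essentially the same route as the paper: an even functional, a genus/pseudoindex-based symmetric minimax (the paper uses Benci's pseudoindex $j(Z)=\min_{\iota}\operatorname{gen}(\iota(Z)\cap\partial B_{\varrho_\lambda})$ rather than the classical symmetric mountain pass, but this is the variant you allude to via Nyamoradi--Razani), the concentration--compactness verification of $(PS)_c$ below the level $\left(\frac{1}{q}-\frac{1}{2p^*_{s,\mu}}\right)\lambda^{-p/(2p^*_{s,\mu}-p)}(m_0S_{p,H})^{2p^*_{s,\mu}/(2p^*_{s,\mu}-p)}$, and the decay $u_{\lambda,\pm i}\to 0$ from $c_{\lambda,i}\le\kappa\lambda^{-p/(2p^*_{s,\mu}-p)}$ combined with the coercivity estimate coming from $(m_1)$--$(m_2)$.

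The one place where your sketch diverges from (and is weaker than) the paper is the construction of the $m$-dimensional subspace on which the energy stays below the threshold. You invoke ``truncated Talenti-type extremizers'' and the heuristic that the factor $\lambda$ in front of the critical term pushes the level down; but the threshold itself decays like $\lambda^{-p/(2p^*_{s,\mu}-p)}$, so one must show the max on the subspace decays at least as fast with an arbitrarily small constant $\kappa$. The paper instead uses disjointly supported bumps $\phi^i_\zeta$ with $|\phi^i_\zeta|_q=1$ and arbitrarily small Gagliardo seminorm (exploiting $\inf\{[\phi]_s^p:|\phi|_q=1\}=0$), modulated by the magnetic phase $e^{iA(0)\cdot x}$ and rescaled by $\lambda^{-\tau}$; the phase is essential, since for a real-valued test function the magnetic seminorm $[u]_{s,A}$ need not be small even when $[u]_s$ is, and Lemma \ref{lem4.2} is devoted precisely to controlling the error $|e^{i(x-y)\cdot(A(0)-A(\cdot))}-1|$ after rescaling. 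This is a correctable omission rather than a fatal one, but as written your level estimate would not go through for the magnetic operator.
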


This paper is organized as follows. In Section \ref{s2}, we present the
working space and the necessary preliminaries. In Section \ref{s3}, we
apply the principle of concentration compactness to prove that the
$(PS)_{c}$ condition holds. In Section \ref{s4}, we check that the mountain
pass geometry is established. In Section \ref{s5}, we use the critical point
theory and some subtle estimates to prove our main results.

\section{Preliminaries}\label{s2}
In this section we shall give the relevant notations  and  some
useful auxiliary lemmas. 
For other background information we refer to Papageorgiou et al. \cite{PRR}.
Let
$$
W_{A}^{s, p}\left(\mathbb{R}^{N}, \mathbb{C}\right)=\left\{u \in
L^p\left(\mathbb{R}^{N}, \mathbb{C}\right):[u]_{s,
A}<\infty\right\},
$$
where $s\in (0,1)$ and
$$[u]_{s, A}:=\left(\iint_{\mathbb{R}^{2 N}} \frac{\left|u(x)-\mathrm{e}^{i(x-y) \cdot A(\frac{x+y}{p})} u(y)\right|^{p}}{|x-y|^{N+ps}} \mathrm{~d}x \mathrm{~d} y\right)^{1/p}.$$
The norm of the fractional  Sobolev space is given by
$$
\|u\|_{W_{A}^{s, p}\left(\mathbb{R}^{N},
\mathbb{C}\right)}=\left([u]_{s, A}^{p}+\|u\|_{L^{p}}^{p}\right)^{1
/ p}.
$$
In order to study our problem \eqref{e1.1},  we shall use the
following subspace of $W_{A}^{s,p}(\mathbb{R}, \mathbb{C})$ defined
by
$$
E=\left\{u \in W_{A}^{s, p}\left(\mathbb{R}^{N}, \mathbb{C}\right): \int_{\mathbb{R}^{N}} V(x)|u|^{p} \mathrm{~d} x<\infty\right\},
$$
with the norm
$$
\|u\|_{E}:=\left([u]_{s, A}^{p}+\int_{\mathbb{R}^{N}} V(x)|u|^{p}
\mathrm{~d} x\right)^{1 / p}.
$$
The condition $(V)$ implies that $E\hookrightarrow
W_{A}^{s,p}(\mathbb{R}^{N}, \mathbb{C})$ is continuous.

Next, we state the well known Hardy-Littlewood-Sobolev inequality
and the diamagnetic inequality which will be used frequently.
\begin{proposition}\label{pro2.1}
(Hardy-Littlewood-Sobolev inequality  \cite[ Theorem 4.3]{lie})
 Let
$1<t$, $r<\infty$ and $0<\mu<N$ with
$\frac{1}{r}+\frac{1}{t}+\frac{\mu}{N}=2, u \in
L^{t}\left(\mathbb{R}^{N}\right)$ and $v \in
L^{r}\left(\mathbb{R}^{N}\right)$. Then there exists a sharp
constant $C(N, \mu, t, r)>0$, independent of $u, v$, such that
$$
\int_{\mathbb{R}^{N}} \frac{|u(x)||v(y)|}{|x-y|^{\mu}} \mathrm{d} x
\mathrm{~d} y \leq C(N, \mu, t, r)\|u\|_{t}\|v\|_{r}.
$$
\end{proposition}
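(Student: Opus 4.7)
The plan is to adopt the classical weak-type estimate plus Marcinkiewicz interpolation route, following Lieb–Loss \cite{lie}. By the standard duality between the bilinear form $B(u,v)=\iint_{\mathbb R^{2N}}|u(x)||v(y)||x-y|^{-\mu}\,dx\,dy$ and the convolution operator $T_{\mu}v(x):=\int_{\mathbb{R}^{N}}|x-y|^{-\mu}v(y)\,dy$, the inequality is equivalent to showing that $T_{\mu}$ maps $L^{r}(\mathbb{R}^{N})$ boundedly into $L^{t'}(\mathbb{R}^{N})$, where $t'$ is the Hölder conjugate of $t$; the scaling hypothesis $\tfrac{1}{r}+\tfrac{1}{t}+\tfrac{\mu}{N}=2$ translates precisely to $\tfrac{1}{t'}=\tfrac{1}{r}-\tfrac{N-\mu}{N}$, which is the only place the index condition and the requirement $0<\mu<N$ enter essentially.

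First I would establish the weak-type bound
\begin{equation*}
\bigl|\{x\in\mathbb{R}^{N}:|T_{\mu}v(x)|>\alpha\}\bigr|\le C\Bigl(\frac{\|v\|_{r}}{\alpha}\Bigr)^{t'},
\end{equation*}
by noting that $|x|^{-\mu}$ lies in the weak Lebesgue space $L^{N/\mu,\infty}(\mathbb{R}^{N})$. The main device is to split the kernel at a radius $R=R(\alpha,\|v\|_{r})$ to be chosen, writing
\begin{equation*}
T_{\mu}v(x)=\int_{|x-y|\le R}|x-y|^{-\mu}v(y)\,dy+\int_{|x-y|>R}|x-y|^{-\mu}v(y)\,dy.
\end{equation*}
Applying Young's convolution inequality to the near part (whose $L^{r'}$ contribution scales as $R^{N/r'-\mu}$) and a pointwise Hölder estimate to the far part, one obtains a pointwise bound on $T_\mu v(x)$ of the form $C(R^{N/r'-\mu}\|v\|_{r}+R^{N/r'-\mu}\|v\|_{r})$ after Chebyshev; optimising in $R$ produces the desired exponent $t'$.

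Next I would invoke the Marcinkiewicz interpolation theorem at two nearby exponents $(r_0,t_0')$ and $(r_1,t_1')$ to upgrade the weak-type bounds to a strong-type estimate $\|T_{\mu}v\|_{t'}\le C\|v\|_{r}$; combining with Hölder's inequality yields $B(u,v)\le\|u\|_{t}\,\|T_{\mu}v\|_{t'}\le C(N,\mu,t,r)\|u\|_{t}\|v\|_{r}$. The main obstacle is twofold. On the technical side, the scaling exponents in the weak-type splitting must be tracked carefully so that optimisation in $R$ reproduces exactly the exponent $t'$ predicted by the dimensional balance. More seriously, the \emph{sharp} constant claimed in Lieb–Loss is \emph{not} produced by interpolation; to capture sharpness I would instead reduce to the radial case via Riesz's rearrangement inequality $B(u,v)\le B(u^{\ast},v^{\ast})$ and, in the conformally invariant endpoint $r=t$, exploit stereographic projection to transfer the problem to the sphere, where the extremisers are characterised explicitly.
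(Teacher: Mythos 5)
The paper does not actually prove this proposition: it is quoted verbatim from Lieb--Loss \cite[Theorem 4.3]{lie} and used as a black box, so there is no in-paper argument to measure your proposal against. Your outline is nonetheless a correct reconstruction of one standard proof of the (non-sharp) inequality: dualize to the Riesz potential $T_{\mu}v=|\cdot|^{-\mu}\ast v$, observe that the index condition is exactly $\tfrac{1}{t'}=\tfrac{1}{r}-\tfrac{N-\mu}{N}$, prove a weak $(r,t')$ bound by splitting the kernel at a radius $R$, and upgrade via Marcinkiewicz interpolation (the off-diagonal hypothesis $t'>r$ does hold here because $\mu<N$). For the record, Lieb and Loss themselves argue differently -- via the layer-cake representation and an elementary estimate on triple products of level sets, with the sharp constant in the diagonal case $t=r$ obtained by Riesz rearrangement and conformal invariance -- so your route is the classical Riesz-potential/interpolation one rather than theirs; both are legitimate, and your closing remark that interpolation cannot produce the sharp constant is accurate.

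Two points to repair. First, in the pointwise bound after the splitting you assign the same exponent $R^{N/r'-\mu}$ to both pieces, which would make the optimisation in $R$ vacuous. The near part must be controlled through the $L^{1}$ norm of the truncated kernel, which scales as $R^{N-\mu}$ (a positive exponent), so that Young's inequality gives an $L^{r}$ function of norm $\le CR^{N-\mu}\|v\|_{r}$; the far part gives the pointwise bound $CR^{N/r'-\mu}\|v\|_{r}$ (a negative exponent, valid because $\mu r'>N$, which is equivalent to $1/t'>0$). Choosing $R$ so that the far part equals $\alpha/2$ and applying Chebyshev to the near part in $L^{r}$ then produces the weak estimate with exactly the exponent $t'$. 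Second, note that the word ``sharp'' in the statement only asserts that a best constant exists; its explicit value is known only in the conformally invariant case $t=r$, and for every application in this paper the non-sharp constant delivered by your interpolation argument is all that is needed.
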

By the Hardy-Littlewood-Sobolev inequality, there exists $\widehat{C}(N, \mu)>0$ such that
$$
\iint_{\mathbb{R}^{2 N}} \frac{|u(x)|^{p_{\mu,
s}^{*}}|u(y)|^{p_{\mu, s}^{*}}}{|x-y|^{\mu}} \mathrm{d} x
\mathrm{~d} y \leq \widehat{C}(N, \mu)\|u\|_{p_{s}^{*}}^{2 p_{\mu,
s}} \ \ \hbox{for all} \ \ u \in E .
$$
Also, there exists $C(N, \mu)>0$ such that
$$
\iint_{\mathbb{R}^{2 N}} \frac{|u(x)|^{p_{\mu,
s}^{*}}|u(y)|^{p_{\mu, s}^{*}}}{|x-y|^{\mu}} \mathrm{d} x
\mathrm{~d} y \leq C(N, \mu)\|u\|_E^{2 p_{\mu, s}^{*}}\ \ \hbox{for
all} \ \ u \in E .
$$

\begin{lemma}\label{lem2.1}(Diamagnetic
inequality \cite[Lemma 3.1, Remark 3.2]{da1}) For every $u \in
W_{A}^{s, p}\left(\mathbb{R}^{N}, \mathbb{C}\right)$, we get $|u|
\in W^{s, p}\left(\mathbb{R}^{N}\right).$ More precisely, we have
$[|u|]_{s} \leq[u]_{s, A}.$
\end{lemma}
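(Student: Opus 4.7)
The plan is to reduce the statement to a pointwise inequality in the integrand of the Gagliardo-type seminorm, which is the standard route to the diamagnetic inequality. First I would establish that for almost every $x,y \in \mathbb{R}^N$,
$$\bigl||u(x)| - |u(y)|\bigr| \leq \bigl|u(x) - e^{i(x-y)\cdot A(\frac{x+y}{p})}\, u(y)\bigr|.$$
This is the crux of the argument. It rests on the elementary fact that for any two complex numbers $z_1,z_2$ and any phase $e^{i\theta}$, one has $\bigl||z_1| - |z_2|\bigr| = \bigl||z_1| - |e^{i\theta} z_2|\bigr| \leq |z_1 - e^{i\theta} z_2|$, applied with $z_1 = u(x)$, $z_2 = u(y)$, and $\theta = (x-y)\cdot A(\tfrac{x+y}{p})$. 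The first equality uses that the modulus is phase-invariant, and the inequality is just the reverse triangle inequality in $\mathbb{C}$.

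Once the pointwise inequality is in hand, I would raise both sides to the $p$-th power, divide by $|x-y|^{N+ps}$, and integrate over $\mathbb{R}^{2N}$. This yields
$$[|u|]_s^{\,p} = \iint_{\mathbb{R}^{2N}} \frac{\bigl||u(x)|-|u(y)|\bigr|^p}{|x-y|^{N+ps}}\,dx\,dy \;\leq\; \iint_{\mathbb{R}^{2N}} \frac{\bigl|u(x) - e^{i(x-y)\cdot A(\frac{x+y}{p})}\, u(y)\bigr|^p}{|x-y|^{N+ps}}\,dx\,dy = [u]_{s,A}^{\,p},$$
and taking $p$-th roots gives the claimed bound $[|u|]_s \leq [u]_{s,A}$.

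To complete the membership statement $|u| \in W^{s,p}(\mathbb{R}^N)$, I would note that $|u| \in L^p(\mathbb{R}^N)$ whenever $u \in L^p(\mathbb{R}^N,\mathbb{C})$ (trivially, since $\bigl\||u|\bigr\|_{L^p} = \|u\|_{L^p}$), and the Gagliardo seminorm of $|u|$ is finite by the inequality just established. Hence $|u|$ lies in the real-valued fractional Sobolev space $W^{s,p}(\mathbb{R}^N)$.

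The main (and essentially only) obstacle is the pointwise inequality itself; once that is secured, the rest is an elementary integration. A subtlety worth being careful about is measurability: one should justify that the integrand $\bigl||u(x)|-|u(y)|\bigr|^p / |x-y|^{N+ps}$ is measurable and non-negative so that the monotonicity of the integral applies directly, but this is immediate from the continuity of the modulus and the measurability of $u$.
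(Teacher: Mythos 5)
Your argument is correct and is exactly the standard proof: the paper itself gives no proof of this lemma, simply citing d'Avenia--Squassina, and the pointwise estimate $\bigl||u(x)|-|u(y)|\bigr|\le\bigl|u(x)-e^{i(x-y)\cdot A(\frac{x+y}{p})}u(y)\bigr|$ via phase invariance of the modulus plus the reverse triangle inequality, followed by integration against the kernel, is precisely the argument in that reference. Nothing is missing.
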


\section{The Palais-Smale condition}\label{s3}
First, we define the set
$$
C_{c}\left(\mathbb{R}^{N}\right)=\left\{u \in
C\left(\mathbb{R}^{N}\right): \operatorname{supp}(u) \text { is a
compact subset of } \mathbb{R}^{N}\right\}
$$
and denote by $C_{0}\left(\mathbb{R}^{N}\right)$ the closure of
$C_{c}\left(\mathbb{R}^{N}\right)$ with respect to the norm
$|\eta|_{\infty}=$ $\sup _{x \in \mathbb{R}^{N}}|\eta(x)|.$ The
measure $\mu$  gives the norm
$$
\|\mu\|=\sup _{\eta \in C_{0}\left(\mathbb{R}^{N}\right),
\quad|\eta|_{\infty}=1}|(\mu, \eta)|,
$$
where $(\mu, \eta)=\int_{\mathbb{R}^{N}} \eta \mathrm{d} \mu$.

In order to prove the compactness condition, we  introduce the
following fractional version of  the  concentration compactness
principle.
\begin{lemma}\label{lem3.1} (see Xiang and Zhang \cite{a3})
Assume that there exist bounded non-negative measures $\omega,
\zeta$ and $\nu$ on $\mathbb{R}^{N}$, and  at most countable set
$\left\{x_{i}\right\}_{i \in I} \in \Omega \backslash\{0\}$ such
that
$$
\begin{aligned}
&u_{n} \rightarrow u \text { weakly in } W^{s, p}\left(\mathbb{R}^{N}\right), \\
&\int_{\mathbb{R}^{N}} \frac{\left||u_{n}(x)|-|u_{n}(y)|\right|^{p}}{|x-y|^{N+s p}} \mathrm{~d} y \rightarrow \omega \text { weakly } * \text { in } \mathcal{M}\left(\mathbb{R}^{N}\right),\\
&\left(\int_{\mathbb{R}^{N}} \frac{|u_{n}|^{p_{\mu,
s}^{*}}}{|x-y|^{\mu}} \mathrm{d} y\right)|u_{n}|^{p_{\mu, s}^{*}}
\rightarrow \nu  \text { weakly}* \text { in }
\mathcal{M}\left(\mathbb{R}^{N}\right).
\end{aligned}
$$
Then there exist a countable sequence of points $\{x_{i}\} \subset
\mathbb{R}^{N}$ and families of positive numbers $\{\nu_i : i \in
I\}$, $\{\zeta_i : i \in I\}$ and $\{\omega_i : i \in I\}$ such that
\begin{align*}
&\omega \geq \int_{\mathbb{R}^{N}} \frac{||u(x)|-|u(y)||^{p}}{|x-y|^{N+p s}} \mathrm{~d} x+\sum_{i \in I} \omega_{i}\delta_{x_{i}},\\
&\zeta = |u|^{p_{s}^{*}}+\sum_{i \in I} \zeta_{i} \delta_{x_{i}},\\
&\nu=\left(\int_{\mathbb{R}^{N}} \frac{|u|^{p_{\mu,
s}^{*}}}{|x-y|^{\mu}} \mathrm{d} y\right)|u|^{p_{\mu,
s}^{*}}+\sum_{i \in I} \nu_{i} \delta_{x_{i}},
\end{align*}
where $I$ is at most countable. Furthermore, we have
\begin{align}\label{e3.1}
S_{p, H} \nu_{i}^{\frac{p}{2p^{*}_{\mu,s}}}\leq \omega_{i}, \quad
\nu_{i} \leq C(N, \mu) \zeta_{i}^{\frac{2 N-\mu}{N}},
\end{align}
where is the Dirac mass of mass 1 concentrated at $\{x_{i}\} \subset
\mathbb{R}^{N}$.
\end{lemma}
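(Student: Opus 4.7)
The strategy is to adapt P.L. Lions' concentration--compactness scheme to the nonlocal Gagliardo seminorm and to the Choquard critical term, along the lines of the fractional framework of Palatucci--Pisante and the Choquard adaptation of Gao--Yang. I would split the argument into three phases: (a) a first concentration--compactness for the Sobolev measure $|u_n|^{p_s^*}\rightharpoonup^* \zeta$; (b) the Hardy--Littlewood--Sobolev atom bound $\nu_i \leq C(N,\mu)\zeta_i^{(2N-\mu)/N}$ tying atoms of $\nu$ to atoms of $\zeta$; and (c) the sharp Sobolev--HLS atom bound $S_{p,H}\nu_i^{p/(2p_{\mu,s}^*)} \leq \omega_i$.

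For phase (a), the diamagnetic inequality of Lemma \ref{lem2.1} reduces everything to the real-valued sequence $|u_n|$, which is bounded in $W^{s,p}(\mathbb{R}^N)$; applying the already-known first-order fractional concentration--compactness principle to $|u_n|$ delivers the claimed atomic decomposition of $\zeta$ together with the lower-bound decomposition of $\omega$ and the at-most-countable atom structure. For phase (b), fix $i\in I$ and let $\phi_{i,\varepsilon}(x)=\phi((x-x_i)/\varepsilon)$ be a smooth bump with $\phi\equiv 1$ near the origin. Testing the HLS inequality of Proposition \ref{pro2.1} with $t=r=2N/(2N-\mu)$ gives
\[
\iint \frac{|u_n(x)|^{p_{\mu,s}^*}|u_n(y)|^{p_{\mu,s}^*}\phi_{i,\varepsilon}(y)}{|x-y|^{\mu}}\,dx\,dy \leq C(N,\mu)\Bigl(\int \phi_{i,\varepsilon}^{\frac{2N}{2N-\mu}}\,|u_n|^{p_s^*}\Bigr)^{\frac{2N-\mu}{N}}\|u_n\|_{p_s^*}^{p_{\mu,s}^*}.
\]
Letting first $n\to\infty$ against the weak-$*$ convergence and then $\varepsilon\to 0$ isolates the atoms on both sides and delivers (b).

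For phase (c), I would test the joint Sobolev--HLS embedding
\[
S_{p,H}\Bigl(\iint |v(x)|^{p_{\mu,s}^*}|v(y)|^{p_{\mu,s}^*}|x-y|^{-\mu}\,dx\,dy\Bigr)^{p/(2p_{\mu,s}^*)} \leq [v]_{s}^{p}
\]
against $v=\phi_{i,\varepsilon}|u_n|$. The right-hand side splits as $\int \phi_{i,\varepsilon}^{p}\,d\mu_n + R_{n,\varepsilon}$, where $\mu_n$ denotes the Gagliardo density of $|u_n|$ and $R_{n,\varepsilon}$ is a nonlocal commutator error. Passing to the limit $n\to\infty$ turns the main part into $\int \phi_{i,\varepsilon}^{p}\,d\omega$, which tends to $\omega_i$ as $\varepsilon \to 0$; the left-hand side isolates $\nu_i^{p/(2p_{\mu,s}^*)}$ by the same mechanism, and combining these yields $S_{p,H}\nu_i^{p/(2p_{\mu,s}^*)}\leq\omega_i$.

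The principal obstacle is the commutator $R_{n,\varepsilon}$, which has no counterpart in the local case since $[\phi u]_{s,p}^{p} \neq \int \phi^{p}\,d[u]_{s,p}^{p}$. I would handle it by splitting $\mathbb{R}^{2N}$ into $\{|x-y|\leq 1\}$ and its complement. On the near-diagonal piece, the Lipschitz bound $|\phi_{i,\varepsilon}(x)-\phi_{i,\varepsilon}(y)|\leq C\varepsilon^{-1}|x-y|$ together with the $W^{s,p}$-boundedness of $|u_n|$ controls $R_{n,\varepsilon}$; on the far piece, the rapid decay $|x-y|^{-N-ps}$ absorbs the contribution via the $L^p$ bound. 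Careful Hölder estimates and a scaling/covering argument in the spirit of Palatucci--Pisante are needed to show that $R_{n,\varepsilon}$ vanishes in the joint limit $n\to\infty,\;\varepsilon\to 0$; this is precisely the step that forces $\omega$ to be stated as the limit of the seminorm density of $|u_n|$ rather than of $u_n$ itself, with the diamagnetic inequality bridging the magnetic setting and this reduction.
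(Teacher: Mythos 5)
The paper does not actually prove Lemma \ref{lem3.1}: it is quoted verbatim from Xiang and Zhang \cite{a3} and used as a black box, so there is no in-paper argument to measure your proposal against. That said, your outline is essentially the standard route by which such statements are established in the cited literature (Lions' scheme, its fractional version \`a la Palatucci--Pisante, and the Choquard adaptation of Gao--Yang \cite{a27}): reduce to the real-valued sequence $|u_n|$, obtain the atomic decomposition of $\zeta$ and the lower bound on $\omega$ from the first-order fractional principle, derive the $\nu$--$\zeta$ atom bound from Hardy--Littlewood--Sobolev, and derive the $\nu$--$\omega$ atom bound by testing the sharp Sobolev--HLS constant against $\phi_{i,\varepsilon}|u_n|$ while controlling the nonlocal commutator. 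You correctly identify the commutator $R_{n,\varepsilon}$ as the genuinely nonlocal difficulty and the near/far-diagonal splitting is the right way to kill it. One technical slip in phase (b): with the cut-off inserted in only one variable, HLS gives $\nu_i \leq C\,\zeta_i^{(2N-\mu)/(2N)}\sup_n\|u_n\|_{p_s^*}^{p_{\mu,s}^*}$, whose exponent and constant do not match the claimed $\nu_i \leq C(N,\mu)\,\zeta_i^{(2N-\mu)/N}$; to get the stated form you should localize symmetrically, i.e.\ test against $\phi_{i,\varepsilon}(x)\phi_{i,\varepsilon}(y)$ so that both HLS factors concentrate and the two exponents $(2N-\mu)/(2N)$ add up to $(2N-\mu)/N$ with a constant depending only on $N$ and $\mu$. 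With that correction the sketch is a faithful reconstruction of the proof in \cite{a3}. Note also that the diamagnetic inequality is not needed inside the lemma itself (its hypotheses already concern $|u_n|$ in $W^{s,p}(\mathbb{R}^N)$); it enters only when the lemma is applied to the magnetic sequence in Lemma \ref{lem3.4}.
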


\begin{lemma}\label{lem3.2}(see Xiang and Zhang \cite{a3})
 Let $\left\{u_{n}\right\}_{n} \subset W^{s,
p}\left(\mathbb{R}^{N}\right)$ be a bounded sequence such that
$$
\begin{aligned}
&u_{n} \rightarrow u \text { weakly in } W^{s, p}\left(\mathbb{R}^{N}\right), \\
&\int_{\mathbb{R}^{N}} \frac{\left||u_{n}(x)|-|u_{n}(y)|\right|^{p}}{|x-y|^{N+s p}} \mathrm{~d} y \rightarrow \omega \text { weakly } * \text { in } \mathcal{M}\left(\mathbb{R}^{N}\right),\\
&\left(\int_{\mathbb{R}^{N}} \frac{|u_{n}|^{p_{\mu,
s}^{*}}}{|x-y|^{\mu}} \mathrm{d} y\right)|u_{n}|^{p_{\mu, s}^{*}}
\rightarrow \nu  \text { weakly}* \text { in }
\mathcal{M}\left(\mathbb{R}^{N}\right)
\end{aligned}
$$
and define
\begin{eqnarray*}
\omega_\infty = \lim_{R\rightarrow\infty}\limsup_{n\rightarrow\infty}\iint_{\mathbb{R}^{2 N}} \frac{\left||u_n(x)|-|u_n(y)|\right|^{p}}{|x-y|^{N+ps}} \mathrm{d}x\mathrm{d}y,
\end{eqnarray*}
\begin{eqnarray*}
\zeta_\infty =
\lim_{R\rightarrow\infty}\limsup_{n\rightarrow\infty}\int_{\mathbb{R}^{N}}|u_{n}|^{p_{s}^{*}}\mathrm{d}x,
\end{eqnarray*}
\begin{eqnarray*}
\nu_\infty = \lim_{R\rightarrow\infty}\limsup_{n\rightarrow\infty}\iint_{\mathbb{R}^{2N}} \frac{|u_{n}(x)|^{p_{\mu, s}^{*}}|u_{n}(y)|^{p_{\mu, s}^{*}}}{|x-y|^{\mu}} \mathrm{d} x \mathrm{d}y.
\end{eqnarray*}
Then the quantities $\omega_{\infty}$ ,$\zeta_{\infty}$and $\nu_{\infty}$ are well defined and satisfy
\begin{eqnarray*}
\limsup\limits\limits_{n\rightarrow\infty}\iint_{\mathbb{R}^{2N}} \frac{|u_{n}(x)|^{p_{\mu, s}^{*}}|u_{n}(y)|^{p_{\mu, s}^{*}}}{|x-y|^{\mu}} \mathrm{d} x \mathrm{d}y
= \displaystyle\int_{\mathbb{R}^N}d\nu + \nu_\infty,
\end{eqnarray*}
\begin{eqnarray*}
\limsup\limits_{n\rightarrow\infty} \iint_{\mathbb{R}^{2 N}} \frac{\left||u_n(x)|-|u_n(y)|\right|^{p}}{|x-y|^{N+ps}} \mathrm{d}x\mathrm{d}y = \int_{\mathbb{R}^N}d\omega + \omega_\infty,
\end{eqnarray*}
\begin{eqnarray*}
\limsup\limits\limits_{n\rightarrow\infty}\displaystyle\int_{\mathbb{R}^N}|u_n|^{p_s^\ast}dx
= \displaystyle\int_{\mathbb{R}^N}d\zeta + \zeta_\infty.
\end{eqnarray*}
In addition, the following inequality holds
\begin{align}\label{e3.2}
S_{p, H} \nu_{\infty}^{\frac{p}{2p^{*}_{\mu,s}}}\leq \omega_\infty, \quad
\nu_{\infty} \leq C(N, \mu) \zeta_{\infty}^{\frac{2 N-\mu}{N}}.
\end{align}
\end{lemma}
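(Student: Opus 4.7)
The plan is to follow the pattern of the second concentration-compactness principle at infinity, adapted to the fractional $p$-Laplacian setting and to the doubly nonlocal Choquard term. First I would verify that the three quantities $\omega_\infty$, $\zeta_\infty$ and $\nu_\infty$ are genuinely well defined. Fix a family of smooth cut-off functions $\psi_{R}\in C^{\infty}(\mathbb{R}^{N})$ with $\psi_{R}\equiv 0$ on $B_{R}$, $\psi_{R}\equiv 1$ on $\mathbb{R}^{N}\setminus B_{R+1}$, $0\le\psi_{R}\le 1$ and $|\nabla\psi_{R}|\le C$ uniformly in $R$; then each of the three candidate quantities can be rewritten (up to vanishing error as $R\to\infty$) as $\lim_{R\to\infty}\limsup_{n}$ of an integral of the corresponding density multiplied by $\psi_{R}^{p}$ or $\psi_{R}^{2p^{*}_{\mu,s}}$. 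Monotonicity in $R$ of these expressions (modulo those errors) together with the $W^{s,p}$-boundedness of $\{u_{n}\}$ and Proposition \ref{pro2.1} yields the existence of the limits and their finiteness.

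Next I would prove the three decomposition identities. For the Gagliardo energy and the $L^{p_{s}^{*}}$-mass, apply weak-$*$ convergence of $\omega$ and $\zeta$ in $\mathcal{M}(\mathbb{R}^{N})$ against the compactly supported test function $1-\psi_{R}$, so that
\begin{equation*}
\limsup_{n\to\infty}\int_{\mathbb{R}^{N}}(1-\psi_{R})\,d\mu_{n}
=\int_{\mathbb{R}^{N}}(1-\psi_{R})\,d\mu,
\end{equation*}
where $\mu_{n}$ denotes the corresponding density for $u_{n}$ and $\mu\in\{\omega,\zeta\}$; letting $R\to\infty$ and using monotone convergence gives the first and the third identity. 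For the Choquard density one uses weak-$*$ convergence of the measures appearing in Lemma \ref{lem3.1} in the same way, but one must also control the cross contribution coming from the double integral when $x$ is near infinity and $y$ is in the bulk; this is done by a Hardy-Littlewood-Sobolev estimate on $\iint |u_{n}(x)|^{p_{\mu,s}^{*}}\chi_{|x|>R}|u_{n}(y)|^{p_{\mu,s}^{*}}\chi_{|y|\le R}/|x-y|^{\mu}\,dxdy$, which vanishes as $R\to\infty$ because $|u_{n}|^{p_{\mu,s}^{*}}\chi_{B_{R}}$ is tight.

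The core analytic step is establishing the two inequalities in \eqref{e3.2}. For $\nu_{\infty}\le C(N,\mu)\zeta_{\infty}^{(2N-\mu)/N}$, I would apply Proposition \ref{pro2.1} directly to $\psi_{R}u_{n}$ (which gives $|u_{n}|^{p_{\mu,s}^{*}}\chi_{|x|>R+1}$ in the integrand), take $\limsup_{n}$ and then $R\to\infty$, and identify each side with $\nu_{\infty}$ and $\zeta_{\infty}^{(2N-\mu)/N}$ respectively. For the harder inequality $S_{p,H}\nu_{\infty}^{p/(2p^{*}_{\mu,s})}\le\omega_{\infty}$, apply the sharp Hardy-Littlewood-Sobolev-type Sobolev inequality in $W^{s,p}(\mathbb{R}^{N})$ (the definition of $S_{p,H}$ used already in Lemma \ref{lem3.1}) to the test function $\psi_{R}|u_{n}|$, which is in $W^{s,p}(\mathbb{R}^{N})$ by Lemma \ref{lem2.1}:
\begin{equation*}
S_{p,H}\Big(\iint_{\mathbb{R}^{2N}}\tfrac{(\psi_{R}|u_{n}(x)|)^{p_{\mu,s}^{*}}(\psi_{R}|u_{n}(y)|)^{p_{\mu,s}^{*}}}{|x-y|^{\mu}}dxdy\Big)^{\!p/(2p_{\mu,s}^{*})}
\le \iint_{\mathbb{R}^{2N}}\tfrac{\big||\psi_{R}u_{n}|(x)-|\psi_{R}u_{n}|(y)\big|^{p}}{|x-y|^{N+ps}}dxdy.
\end{equation*}

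The main obstacle will be passing to the limit in the right-hand side above: one has to show that the Gagliardo seminorm of $\psi_{R}|u_{n}|$ is, up to $o(1)$ as $n\to\infty$ and then $R\to\infty$, bounded by the portion of $[|u_{n}|]_{s}^{p}$ supported far from the origin, i.e.\ the quantity producing $\omega_{\infty}$. To handle this I would split the double integral into the regions $\{|x|,|y|>R\}$, $\{|x|>R,|y|\le R\}$ (and symmetrically) and $\{|x|,|y|\le R\}$: on the first region $\psi_{R}\equiv 1$ and one recovers exactly the ``tail'' Gagliardo energy of $|u_{n}|$, controlled by $\omega_{\infty}$ via Lemma \ref{lem2.1}; on the last region $\psi_{R}\equiv 0$; on the mixed region the inequality $||\psi_{R}u_{n}|(x)-|\psi_{R}u_{n}|(y)|^{p}\le C(|u_{n}(x)|^{p}|\psi_{R}(x)-\psi_{R}(y)|^{p}+|u_{n}(y)-u_{n}(x)|^{p})$ reduces matters to showing $\iint_{|y|\le R}|u(x)|^{p}|\psi_{R}(x)-\psi_{R}(y)|^{p}/|x-y|^{N+ps}\,dxdy\to 0$ as $R\to\infty$, by strong $L^{p}_{\mathrm{loc}}$ convergence of $u_{n}\to u$ and the uniform Lipschitz bound on $\psi_{R}$. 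Taking $\liminf_{n\to\infty}$ of the Hardy-Littlewood-Sobolev integral on the left-hand side, and then $R\to\infty$, yields $\nu_{\infty}^{p/(2p_{\mu,s}^{*})}$, giving the desired bound.
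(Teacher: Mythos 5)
You should first be aware that the paper does not prove Lemma \ref{lem3.2} at all: it is imported verbatim from Xiang and Zhang \cite{a3}, so there is no in-paper argument to compare yours against. Your plan follows the standard Chabrowski/Lions ``concentration at infinity'' route (cut-offs $\psi_R$, testing the weak-$*$ limits against $1-\psi_R$, Sobolev/HLS applied to $\psi_R|u_n|$, plus a commutator estimate for the mixed region of the Gagliardo seminorm), and that skeleton is the right one. The commutator step you defer is the same estimate as \eqref{e3.14} and is standard.

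The genuine gap is your treatment of the cross term in the doubly nonlocal Choquard integral, and it matters in two places. You claim that
\begin{equation*}
\iint_{\{|x|>R,\ |y|\le R\}}\frac{|u_{n}(x)|^{p_{\mu,s}^{*}}|u_{n}(y)|^{p_{\mu,s}^{*}}}{|x-y|^{\mu}}\,dx\,dy\longrightarrow 0
\end{equation*}
``because $|u_{n}|^{p_{\mu,s}^{*}}\chi_{B_{R}}$ is tight.'' Tightness of that family is not among the hypotheses and does not follow from boundedness in $W^{s,p}$; the Hardy--Littlewood--Sobolev estimate only bounds this term by $C\|u_{n}\|_{L^{p_{s}^{*}}(|x|>R)}^{p_{\mu,s}^{*}}\|u_{n}\|_{L^{p_{s}^{*}}(|y|\le R)}^{p_{\mu,s}^{*}}$, which is bounded but need not be small uniformly in $n$ as $R\to\infty$ (mass can sit at scale $|x|\sim R$). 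For the decomposition identity this issue is avoidable, and in fact you do not need the cross term at all: since $\nu_{\infty}$ carries the cut-off in the $x$-variable only, writing $\int d\nu_{n}=\int(1-\psi_{R})\,d\nu_{n}+\int\psi_{R}\,d\nu_{n}$ and using weak-$*$ convergence on the first piece already gives $\limsup_{n}\int d\nu_{n}=\int d\nu+\nu_{\infty}$. But for \eqref{e3.2} the problem is real: applying $S_{p,H}$ or Proposition \ref{pro2.1} to $\psi_{R}|u_{n}|$ produces the \emph{doubly} cut-off quantity $\iint\frac{(\psi_{R}|u_{n}|)(x)^{p_{\mu,s}^{*}}(\psi_{R}|u_{n}|)(y)^{p_{\mu,s}^{*}}}{|x-y|^{\mu}}\,dx\,dy$, which is $\le\int\psi_{R}\,d\nu_{n}$, so without showing that the discrepancy (exactly the cross term) is negligible you only bound a quantity that may be strictly smaller than $\nu_{\infty}$, and both inequalities in \eqref{e3.2} are left unproved. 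You need either a correct argument that the cross term vanishes in the iterated limit (e.g.\ by inserting an intermediate scale $R/2$ so that the supports separate and the kernel contributes $O(R^{-\mu})$, plus a separate treatment of the annulus $R/2\le|y|\le 2R$), or to set up $\nu_{\infty}$ via the doubly cut-off expression from the start and verify the decomposition identity for that definition.
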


In order to prove the main results, we define the energy functional of
problem \eqref{e1.1} as follows
\begin{align}\label{e3.3}
J_{\lambda}(u) :=\frac{1}{p} \widetilde{M}\left([u]_{s,
A}^{p}\right)+\frac{1}{p}
\int_{\mathbb{R}^{N}}V(x)|u|^{p}\mathrm{~d}x -\frac{\lambda}{2 p_{s,
\mu}^{*}}\iint_{\mathbb{R}^{2 N}} \frac{|u(y)|^{p_{s,
\mu}^{*}}|u(x)|^{p_{s, \mu}^{*}}}{|x-y|^{\mu}}
\mathrm{d}y\mathrm{d}x-\frac{k}{q}\int_{\mathbb{R}^{N}}|u|^{q}\mathrm{d}x.
\end{align}
Under hypothetical conditions $(V)$ and $(M)$, a simple test as in
Willem \cite{w1}, yields that $J_{\lambda}\in C^{1}(E, \mathbb{R})$
and its critical points are weak solutions of problem \eqref{e1.1},
if
\begin{align}
&\nonumber M\left([u]_{s, A}^{p}\right) \operatorname{Re}L(u,
v)+\operatorname{Re}
\int_{\mathbb{R}^{N}} V(x)|u|^{p-2} u \bar{v} \mathrm{~d}x \\
&=\operatorname{Re}
\int_{\mathbb{R}^{N}}\left[\lambda\left(\int_{\mathbb{R}^{N}}
\frac{|u|^{p_{\mu, s}^{*}}}{|x-y|^{\mu}} \mathrm{d}
y\right)|u|^{p_{\mu, s}^{*}-2} u+k|u|^{q-2} u\right]\bar{v}
\mathrm{~d} x,
\end{align}
where
\begin{eqnarray}
L(u,v) = \iint_{\mathbb{R}^{2N}}\frac{|u(x)-e^{i(x-y)\cdot
A(\frac{x+y}{p})}u(y)|^{p-2}(u(x)-e^{i(x-y)\cdot
A(\frac{x+y}{p})}u(y))\overline{(v(x)-e^{i(x-y)\cdot
A(\frac{x+y}{p})}v(y))}}{|x-y|^{N+ps}}dx dy
\end{eqnarray}
and $v\in E.$

Next, we state and prove the following lemma.
\begin{lemma}\label{lem3.3}
Assume that conditions $(V)$ and $(M)$ hold. Then any $(PS)_{c}$ sequence
$\{u_{n}\}_{n}$ for $J_{\lambda}$ is bounded in $E$ and $c\geq0$.
\end{lemma}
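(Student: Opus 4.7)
The plan is to combine the $(PS)_c$ information at $u_n$ with a carefully chosen linear combination of $J_\lambda(u_n)$ and $\langle J_\lambda'(u_n), u_n\rangle$ so that every nonlinear contribution has the correct sign. Specifically, I would set $D_n := J_\lambda(u_n) - \frac{1}{q}\langle J_\lambda'(u_n), u_n\rangle$ and estimate $D_n$ both from below, using the structural assumptions $(V)$, $(m_1)$, $(m_2)$, and from above, using the $(PS)_c$ hypothesis.

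For the lower bound, first note that the subcritical $|u_n|^q$ contribution cancels exactly, because its coefficient in $J_\lambda$ is $k/q$. The Hardy--Littlewood--Sobolev term produces the coefficient $\lambda\bigl(\frac{1}{q} - \frac{1}{2p_{s,\mu}^*}\bigr)$, which is strictly positive since $q < 2p_{s,\mu}^*$, and hence contributes nonnegatively. For the Kirchhoff piece I would invoke $(m_2)$ with $t_n := [u_n]_{s,A}^p$: from $\widetilde{M}(t_n) \geq \sigma M(t_n) t_n$ together with $(m_1)$ it follows that
\[
\frac{1}{p}\widetilde{M}(t_n) - \frac{1}{q} M(t_n) t_n \;\geq\; \Bigl(\frac{\sigma}{p} - \frac{1}{q}\Bigr) M(t_n) t_n \;\geq\; m_0 \Bigl(\frac{\sigma}{p} - \frac{1}{q}\Bigr)[u_n]_{s,A}^p,
\]
where $\frac{\sigma}{p} - \frac{1}{q} > 0$ because $\sigma > p/q$. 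Finally, the potential term yields $\bigl(\frac{1}{p} - \frac{1}{q}\bigr)\int_{\mathbb{R}^N} V(x)|u_n|^p\,dx \geq 0$ since $p < q$ and $V \geq 0$. Assembling these contributions produces a constant $C_0 > 0$, depending only on $m_0, \sigma, p, q$, such that $D_n \geq C_0 \|u_n\|_E^p$.

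For the upper bound, the $(PS)_c$ hypothesis gives $|J_\lambda(u_n)| \leq c + 1$ and $\|J_\lambda'(u_n)\|_{E^*} =: \varepsilon_n \to 0$ for $n$ large, whence $D_n \leq c + 1 + q^{-1}\varepsilon_n \|u_n\|_E$. Combining with the lower bound gives
\[
C_0 \|u_n\|_E^p \;\leq\; c + 1 + \frac{\varepsilon_n}{q}\|u_n\|_E,
\]
and since $p > 1$ this forces $\{u_n\}$ to be bounded in $E$. Once boundedness is in hand, $\varepsilon_n\|u_n\|_E \to 0$, so passing to the $\limsup$ in the same inequality yields $0 \leq \limsup_n C_0\|u_n\|_E^p \leq c$, which proves $c \geq 0$.

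I do not foresee a substantive obstacle: this is the standard Ambrosetti--Rabinowitz-type test adapted to the Kirchhoff/Choquard setting, and the two structural inequalities required ($\sigma > p/q$ from $(m_2)$ and $q < 2p_{s,\mu}^*$ from the growth range) are built into the hypotheses. The only bookkeeping point is verifying that the $|u|^q$-coefficient equals $1/q$ in both $J_\lambda$ and $\langle J_\lambda'(u),u\rangle$ so that it cancels cleanly in $D_n$, which is immediate from \eqref{e3.3} and the associated formula for $J_\lambda'$.
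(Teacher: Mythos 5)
Your proposal is correct and follows essentially the same route as the paper: both form $J_\lambda(u_n)-\frac{1}{q}\langle J_\lambda'(u_n),u_n\rangle$, cancel the $|u_n|^q$ term, use $(m_2)$ with $(m_1)$ to bound the Kirchhoff part below by $\bigl(\frac{\sigma}{p}-\frac{1}{q}\bigr)m_0[u_n]_{s,A}^p$, and discard the nonnegative Hardy--Littlewood--Sobolev and potential contributions to obtain $C_0\|u_n\|_E^p \leq c+o(1)\|u_n\|_E$. The boundedness and the conclusion $c\geq 0$ then follow exactly as in the paper's argument.
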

\begin{proof}
Suppose that  $\{u_{n}\}_{n} \subset E$ is a  $(PS)$ sequence for
$J_{\lambda}.$ Then  we have
\begin{align}\label{e3.6}
c + o_n(1) = J_{\lambda}(u_{n})= & \nonumber \frac{1}{p}
\widetilde{M}\left([u_{n}]_{s,
A}^{p}\right)+\frac{1}{p}\int_{\mathbb{R}^{N}}V(x)|u_{n}|^{p}\mathrm{~d}x
\\
&-\frac{\lambda}{2 p_{s, \mu}^{*}} \iint_{\mathbb{R}^{2 N}}
\frac{|u_{n}(y)|^{p_{s, \mu}^{*}}|u_{n}(x)|^{p_{s,
\mu}^{*}}}{|x-y|^{\mu}} \mathrm{d} y \mathrm{~d} x-\frac{k}{q}\int_{\mathbb{R}^{N}}|u_n|^{q}\mathrm{d} x
\end{align}
and
\begin{align}\label{e3.7}
\left\langle J_{\lambda}^{\prime}\left(u_{n}\right),
v\right\rangle=& \nonumber
\operatorname{Re}\left\{M\left(\left[u_{n}\right]_{s, A}^{p}\right)
L\left(u_{n}, v\right)+\int_{\mathbb{R}^{N}}
V(x)\left|u_{n}\right|^{p-2} u_{n} \bar{v} \mathrm{~d} x\right.\\
&\nonumber\left.-\lambda\int_{\mathbb{R}^{N}}\left[\left(\int_{\mathbb{R}^{N}}
\frac{|u_{n}|^{p_{\mu, s}^{*}}}{|x-y|^{\mu}}\mathrm{d}
y\right)|u_{n}|^{p_{\mu, s}^{*}-2} u_{n}(x)
\bar{v}\right]\mathrm{~d}x-k\int_{\mathbb{R}^{N}}|u_{n}|^{q-2}u_{n}\bar{v}
\mathrm{~d}x\right\}\\
=&o(1)\left\|u_{n}\right\|_{E}.
\end{align}
It follows from \eqref{e3.6}, \eqref{e3.7} and $(M)$ that
\begin{align}\label{e3.8}
c+o(1)\left\|u_{n}\right\|_{\lambda} \nonumber \geq&
J_{\lambda}(u_{n})-\frac{1}{q} \left\langle
J_{\lambda}^{\prime}\left(u_{n}\right),
u_{n}\right\rangle \\
\nonumber =& \frac{1}{p} \widetilde{M}\left([u_{n}]_{s,
A}^{p}\right)-\frac{1}{q}M\left([u_{n}]_{s, A}^{p}\right)[u_{n}]_{s,
A}^{p} + \left(\frac{1}{p}-\frac{1}{q}\right) \int_{\mathbb{R}^{N}}V(x)|u_{n}|^{p}\mathrm{~d}x\\
&\nonumber +\left(\frac{1}{q}-\frac{1}{2 p_{s,
\mu}^{*}}\right)\lambda\iint_{\mathbb{R}^{2 N}} \frac{|u_{n}(y)|^{p_{s,
\mu}^{*}}|u_{n}(x)|^{p_{s, \mu}^{*}}}{|x-y|^{\mu}} \mathrm{d} y
\mathrm{~d} x \\
\geq&\nonumber\left(\frac{\sigma}{p}-\frac{1}{q}\right)m_0[u_{n}]_{s,
A}^{p} + \left(\frac{1}{p}-\frac{1}{q}\right) \int_{\mathbb{R}^{N}}V(x)|u_{n}|^{p}\mathrm{~d}x\\
\geq&\min\left\{\left(\frac{\sigma}{p}-\frac{1}{q}\right)m_0,
\left(\frac{1}{p}-\frac{1}{q}\right) \right\}\|u_n\|^p_{E}.
\end{align}
This fact implies that $\{u_{n}\}_{n}$ is bounded in $E$. We also
obtain  $c\geq0$ from \eqref{e3.8}.
\end{proof}

Now, we can show that the  following compactness condition holds.
\begin{lemma}\label{lem3.4}
Assume that conditions $(V)$ and $(M)$  hold. Then  $J_{\lambda}(u)$ satisfies
 $(PS)_{c}$ condition, for all $sp<N<sp^{2}$ and
$$ c<    \left(\frac{1}{q}-\frac{1}{2 p_{s,
\mu}^{*}}\right)\lambda^{-\frac{p}{2p_{s,
\mu}^{*}-p}}\left(m_0S_{p,H}\right)^{\frac{2p_{s,
\mu}^{*}}{2p_{s,
\mu}^{*}-p}}.$$
\end{lemma}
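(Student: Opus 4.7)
The plan is to adapt the standard concentration--compactness strategy for critical problems to the magnetic fractional $p$-Laplacian with Hardy--Littlewood--Sobolev nonlinearity. Let $\{u_n\}\subset E$ be a $(PS)_c$ sequence with $c$ strictly below the stated threshold. By Lemma~\ref{lem3.3}, $\{u_n\}$ is bounded in $E$, so along a subsequence $u_n\rightharpoonup u$ in $E$, and by the diamagnetic inequality of Lemma~\ref{lem2.1} also $|u_n|\rightharpoonup|u|$ in $W^{s,p}(\mathbb{R}^N)$, with $|u_n|\to|u|$ a.e.\ and in $L^r_{\mathrm{loc}}(\mathbb{R}^N)$ for all $r\in[1,p_s^*)$. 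I would then apply Lemmas~\ref{lem3.1} and \ref{lem3.2} to $\{|u_n|\}$ to obtain the decomposition of the HLS-critical measure $\nu$ into its regular part plus atoms $\nu_i\delta_{x_i}$ together with a tail mass $\nu_\infty$ at infinity. The goal is to rule out every $\nu_i$ and $\nu_\infty$, promote this to strong convergence of the nonlinear term, and finally close the argument via a Simon-type monotonicity estimate for the magnetic operator.

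To eliminate an atom at $x_i$, I would fix a cutoff $\phi_\varepsilon(x)=\phi((x-x_i)/\varepsilon)$ with $\phi\in C_c^\infty(\mathbb{R}^N)$, $\phi\equiv 1$ on $B_1$ and $\mathrm{supp}\,\phi\subset B_2$, and test the $(PS)$ relation against $\phi_\varepsilon u_n\in E$. The magnetic kinetic form would be handled by the splitting
\[
u_n(x)-e^{i(x-y)\cdot A((x+y)/p)}u_n(y)\phi_\varepsilon(y)=\phi_\varepsilon(x)\bigl(u_n(x)-e^{i(x-y)\cdot A((x+y)/p)}u_n(y)\bigr)+\bigl(\phi_\varepsilon(x)-\phi_\varepsilon(y)\bigr)e^{i(x-y)\cdot A((x+y)/p)}u_n(y),
\]
where the commutator term is shown to be $o_\varepsilon(1)$ uniformly in $n$ as in d'Avenia--Squassina \cite{da1}. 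The potential and subcritical contributions vanish in the iterated limit $n\to\infty$, $\varepsilon\to 0$ by local compactness and $|\mathrm{supp}\,\phi_\varepsilon|\to 0$; the critical HLS term contributes $\lambda\nu_i$; and the kinetic term, via $(m_1)$, contributes at least $m_0\omega_i$. Combining this with \eqref{e3.1} forces either $\nu_i=0$ or $\nu_i\geq(m_0S_{p,H}/\lambda)^{2p^*_{\mu,s}/(2p^*_{\mu,s}-p)}$. If the latter held, $(m_2)$ together with \eqref{e3.6}--\eqref{e3.7} would give
\[
c=\lim_{n\to\infty}\left[J_\lambda(u_n)-\tfrac{1}{q}\langle J'_\lambda(u_n),u_n\rangle\right]\geq\left(\tfrac{1}{q}-\tfrac{1}{2p^*_{s,\mu}}\right)\lambda\nu_i\geq\left(\tfrac{1}{q}-\tfrac{1}{2p^*_{s,\mu}}\right)\lambda^{-p/(2p^*_{s,\mu}-p)}(m_0S_{p,H})^{2p^*_{s,\mu}/(2p^*_{s,\mu}-p)},
\]
contradicting the energy bound. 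An analogous cutoff $\psi_R(x)=\psi(x/R)$ supported outside $B_R$, combined with \eqref{e3.2}, rules out $\nu_\infty$.

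With no atoms and no tail, Lemma~\ref{lem3.2} yields $\iint_{\mathbb{R}^{2N}}\frac{|u_n|^{p^*_{\mu,s}}|u_n|^{p^*_{\mu,s}}}{|x-y|^\mu}\,dx\,dy\to\iint_{\mathbb{R}^{2N}}\frac{|u|^{p^*_{\mu,s}}|u|^{p^*_{\mu,s}}}{|x-y|^\mu}\,dx\,dy$. Condition $(V)$ makes sublevels of $V$ have finite measure, providing a compact embedding $E\hookrightarrow L^q(\mathbb{R}^N)$ in the subcritical range, so the $L^q$ term also converges. Testing $\langle J'_\lambda(u_n)-J'_\lambda(u),u_n-u\rangle=o(1)$ and applying Brezis--Lieb-type splitting to the HLS and $L^q$ pieces, one is left with
\[
M([u_n]_{s,A}^p)\mathrm{Re}\,L(u_n,u_n-u)-M([u]_{s,A}^p)\mathrm{Re}\,L(u,u_n-u)+\int_{\mathbb{R}^N}V(x)\bigl(|u_n|^{p-2}u_n-|u|^{p-2}u\bigr)\overline{(u_n-u)}\,dx=o(1).
\]
A Simon-type monotonicity inequality for the magnetic fractional $p$-Laplacian (immediate for $p\geq 2$, and in the subquadratic regime $1<p<2$ requiring the range $sp<N<sp^2$ to keep the auxiliary H\"older exponents admissible) then upgrades weak to strong convergence $u_n\to u$ in $E$.

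The main obstacle is twofold. First, the commutator estimate in the atom-removal step requires delicate handling of the magnetic phase $e^{i(x-y)\cdot A((x+y)/p)}$, which must be Taylor-expanded so the cross-terms in $L(u_n,\phi_\varepsilon u_n)$ are $o_\varepsilon(1)$ uniformly in $n$. Second, in the subquadratic case $1<p<2$ of the final step, one must use precisely the restriction $sp<N<sp^2$ to secure integrability of the auxiliary factor entering the Simon-type inequality, which is what makes the monotonicity argument genuinely close and return $u_n\to u$ in the norm of $E$.
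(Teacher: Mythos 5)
Your proposal follows essentially the same route as the paper: boundedness from Lemma \ref{lem3.3}, the concentration--compactness decomposition of Lemmas \ref{lem3.1}--\ref{lem3.2}, elimination of the atoms and of the mass at infinity by testing against cutoff multiples of $u_n$ and deriving the dichotomy $\nu_i=0$ or $\nu_i\geq(\lambda^{-1}m_0S_{p,H})^{2p^*_{\mu,s}/(2p^*_{\mu,s}-p)}$ together with the energy contradiction, and finally strong convergence via $\langle J'_\lambda(u_n)-J'_\lambda(u),u_n-u\rangle\to 0$. The only cosmetic difference is in the last step, where the paper replaces your Simon-type pointwise monotonicity inequality by a H\"older-type lower bound reducing matters to convergence of the norms $[u_n]_{s,A}$ and $\int V|u_n|^p$ (using the monotonicity of $t\mapsto M(t)t^{(p-1)/p}$) followed by uniform convexity, which accomplishes the same thing.
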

\begin{proof}
Let $\{u_{n}\}_{n}$ be a $(PS)_{c}$ sequence for $J_{\lambda}$.
Then, by Lemma \ref{lem3.3}, we know that the sequence
$\{u_{n}\}_{n}$ is bounded in $E$. Moreover, we know that there
exists a subsequence, still denoted by $\{u_{n}\}$, such that
$u_{n}\rightharpoonup u$ weakly in $E$. Moreover, we have
\begin{equation}\label{e3.9}
u_{n} \rightarrow u \ \ \text { a.e. in } \mathbb{R}^{N}, \quad
u_{n} \rightarrow u \ \ \text { in }
L^{s}\left(\mathbb{R}^{N}\right), 1 \leq s<p_{s}^{*}.
\end{equation}
Now, by the concentration-compactness principle, we may assume that
there exist bounded non-negative measures $\omega, \zeta$ and $\nu$
on $\mathbb{R}^{N}$, and an at most countable set
$\left\{x_{i}\right\}_{i \in I} \in \Omega \backslash\{0\}$ such
that
$$
\int_{\mathbb{R}^{N}}
\frac{\left||u_{n}(x)|-|u_{n}(y)|\right|^{p}}{|x-y|^{N+p s}}
\mathrm{~d} x \rightarrow \omega,
\quad\left|u_{n}\right|^{p_{s}^{*}} \rightarrow \zeta
$$
and
$$
\left(\int_{\mathbb{R}^{N}} \frac{\left|u_{n}\right|^{p_{\mu, s}^{*}}}{|x-y|^{\mu}} \mathrm{d} y\right)\left|u_{n}\right|^{p_{\mu, s}^{*}} \rightarrow \nu.
$$
Now, there exists a countable sequence of points $\{x_{i}\} \subset
\mathbb{R}^{N}$ and families of positive numbers $\{\nu_i : i \in
I\}$, $\{\zeta_i : i \in I\}$ and $\{\omega_i : i \in I\}$ such that
\begin{align*}
&\omega \geq \int_{\mathbb{R}^{N}} \frac{||u(x)|-|u(y)||^{p}}{|x-y|^{N+p s}} \mathrm{~d} x+\sum_{i \in I} \omega_{i}\delta_{x_{i}},\\
&\zeta = |u|^{p_{s}^{*}}+\sum_{i \in I} \zeta_{i} \delta_{x_{i}},\\
&\nu=\left(\int_{\mathbb{R}^{N}} \frac{|u|^{p_{\mu,
s}^{*}}}{|x-y|^{\mu}} \mathrm{d} y\right)|u|^{p_{\mu,
s}^{*}}+\sum_{i \in I} \nu_{i} \delta_{x_{i}}.
\end{align*}
We can also get
\begin{align}\label{e3.10}
S_{p, H} \nu_{i}^{\frac{p}{2p^{*}_{\mu,s}}}\leq \omega_{i}, \quad
\nu_{i} \leq C(N, \mu) \zeta_{i}^{\frac{2 N-\mu}{N}}.
\end{align}
In the sequel, we shall prove that
\begin{equation}\label{e3.11}
I = \emptyset.
\end{equation}
Suppose to the contrary, that $I \neq \emptyset$. Then we can define
a smooth cut-off function such that $\phi \in
C_0^\infty(\mathbb{R}^N)$ and $0 \leq \phi \leq 1$; $\phi \equiv 1$
in $B(x_i, \epsilon)$, $\phi(x) = 0$ in $\mathbb{R}^N \setminus
B(x_i, 2\epsilon)$.  Let $\epsilon
> 0$ and $\phi_\epsilon^i =
\phi\left(\frac{x-x_i}{\epsilon}\right)$, where $i \in I$. It is not
difficult to see that $\{u_n\phi_\epsilon^i\}_n$ is bounded in $E$.
Then $\langle J_\lambda'(u_n), u_n\phi_\epsilon^i\rangle \rightarrow
0$, which implies
\begin{align}\label{e3.12}
&\nonumber
M\left([u_n]_{s,A}^p\right)\iint_{\mathbb{R}^{2N}}\frac{|u_n(x)-e^{i(x-y)\cdot
A(\frac{x+y}{p})}u_n(y)|^p\phi_\epsilon^i(y)}{|x-y|^{N+ps}}dx dy + \int_{\mathbb{R}^N}V(x)|u_n|^p\phi_\epsilon^i(x) dx   \\
&\nonumber\mbox{}= - \mbox{Re}\left\{
M\left([u_n]_{s,A}^p\right)\mathcal{L}\left(u_{n}, u_n\phi_\epsilon^i\right)\right\} + \lambda \iint_{\mathbb{R}^{2 N}}
\frac{|u_{n}(y)|^{p_{s, \mu}^{*}}|u_{n}(x)|^{p_{s,
\mu}^{*}}\phi_\epsilon^i(x)}{|x-y|^{\mu}} \mathrm{d}y \mathrm{~d}x \\
&\quad +
k\int_{\mathbb{R}^{N}}|u_{n}|^{q}\phi_\epsilon^i(x)
\mathrm{~d}x +o_n(1),
\end{align}
where
\begin{align*}
&\mathcal{L}\left(u_{n}, u_n\phi_\epsilon^i\right)\\
& = \iint_{\mathbb{R}^{2N}}\frac{|u_n(x)-e^{i(x-y)\cdot
A(\frac{x+y}{p})}u_n(y)|^{p-2}(u_n(x)-e^{i(x-y)\cdot
A(\frac{x+y}{p})}u_n(y))\overline{u_n(x)(\phi_\epsilon^i(x)-\phi_\epsilon^i(y))}}{|x-y|^{N+ps}}dx
dy.
\end{align*}
By the  H\"{o}lder inequality, we know that
\begin{eqnarray}\label{e3.13}
&&\nonumber\left|\mbox{Re}\left\{
M\left([u_n]_{s,A}^p\right)\mathcal{L}\left(u_{n}, u_n\phi_\epsilon^i\right)\right\}\right|\\
&& \nonumber\mbox{} \  \leq
C\left(\iint_{\mathbb{R}^{2N}}\frac{|u_n(x)-e^{i(x-y)\cdot
A(\frac{x+y}{p})}u_n(y)|^{p}}{|x-y|^{N+ps}}dxdy\right)^{(p-1)/p}
\left(\iint_{\mathbb{R}^{2N}}\frac{|u_n(x)|^p|\phi_\epsilon^i(x)-\phi_\epsilon^i(y)|^p}{|x-y|^{N+ps}}dxdy\right)^{1/p}
\\
&& \mbox{} \  \leq C
\left(\iint_{\mathbb{R}^{2N}}\frac{|u_n(x)|^p|\phi_\epsilon^i(x)-\phi_\epsilon^i(y)|^p}{|x-y|^{N+ps}}dxdy\right)^{1/p}.
\end{eqnarray}
On the other hand, as in the proof of Lemma 3.4 in Zhang et al.
\cite{zhang2}, we can get that
\begin{eqnarray}\label{e3.14}
\lim_{\epsilon\rightarrow
 0}\lim_{n\rightarrow\infty}\iint_{\mathbb{R}^{2N}}\frac{|u_n(x)|^p|\phi_\epsilon^i(x)-\phi_\epsilon^i(y)|^p}{|x-y|^{N+ps}}dxdy
=0.
\end{eqnarray}
It follows from  \eqref{e3.12}--\eqref{e3.14} and the diamagnetic inequality that
\begin{eqnarray}\label{e3.15}
m_0\omega_i \leq \lambda\nu_i.
\end{eqnarray}
This fact together with \eqref{e3.8} implies that
$${\rm (I)} \quad \nu_i = 0 \ \indent \mbox{or}\ \quad {\rm (II)} \quad  \nu_i \geq \left(\lambda^{-1}m_0S_{p,H}\right)^{\frac{2p_\mu^*}{2p_\mu^*-p}}.$$
If (II) occurs for some $i_0 \in I$,  then
\begin{align}\label{12'}
c =& \nonumber\lim_{n\rightarrow\infty}\left(
J_{\lambda}(u_{n})-\frac{1}{q} \left\langle
J_{\lambda}^{\prime}\left(u_{n}\right),
u_{n}\right\rangle \right)\\
\nonumber \geq&  \left(\frac{1}{q}-\frac{1}{2 p_{s,
\mu}^{*}}\right)\lambda\iint_{\mathbb{R}^{2 N}} \frac{|u_{n}(y)|^{p_{s,
\mu}^{*}}|u_{n}(x)|^{p_{s, \mu}^{*}}}{|x-y|^{\mu}} \mathrm{d} y
\mathrm{~d} x \\
\geq&\left(\frac{1}{q}-\frac{1}{2 p_{s,
\mu}^{*}}\right)\lambda\nu_i \geq  \left(\frac{1}{q}-\frac{1}{2 p_{s,
\mu}^{*}}\right)\lambda^{-\frac{p}{2p_{s,
\mu}^{*}-p}}\left(m_0S_{p,H}\right)^{\frac{2p_{s,
\mu}^{*}}{2p_{s,
\mu}^{*}-p}}.
\end{align}
This is an obvious contradiction to the choice of $c$. This
completes the proof of \eqref{e3.11}.

Next, we shall prove the  concentration at infinity. To this end,
set $\phi_{R}\in C_{0}^{\infty}(\mathbb{R}^{N})$ for $R>0$, and
satisfies $\phi_{R}(x)=0$ for $|x|<R$, $\phi_{R}(x)=1$ for $|x|>2R$,
$0\leq \phi_{R} \leq 1$, and $|\nabla\phi_{R}|\leq {\frac{2}{R}}$.
Invoking Theorem $2.4$ of Xiang and Zhang \cite{a3}, we define
\begin{eqnarray*}
\omega_\infty = \lim_{R\rightarrow\infty}\limsup_{n\rightarrow\infty}\iint_{\mathbb{R}^{2 N}} \frac{\left||u_n(x)|-|u_n(y)|\right|^{p}\phi_{R}(x)}{|x-y|^{N+ps}} \mathrm{d}x\mathrm{d}y,
\end{eqnarray*}
\begin{eqnarray*}
\zeta_\infty = \lim_{R\rightarrow\infty}\limsup_{n\rightarrow\infty}\int_{\mathbb{R}^{N}}|u_{n}|^{p_{s}^{*}}\phi_{R} \mathrm{d}x
\end{eqnarray*} and
\begin{eqnarray*}
\nu_\infty = \lim_{R\rightarrow\infty}\limsup_{n\rightarrow\infty}\iint_{\mathbb{R}^{2N}} \frac{|u_{n}(x)|^{p_{\mu, s}^{*}}|u_{n}(y)|^{p_{\mu, s}^{*}}\phi_{R}(x)}{|x-y|^{\mu}} \mathrm{d} x \mathrm{d}y.
\end{eqnarray*}
By Lemma \ref{lem3.2}, we have
\begin{eqnarray*}
\limsup\limits\limits_{n\rightarrow\infty}\iint_{\mathbb{R}^{2N}} \frac{|u_{n}(x)|^{p_{\mu, s}^{*}}|u_{n}(y)|^{p_{\mu, s}^{*}}\phi_{R}(x)}{|x-y|^{\mu}} \mathrm{d} x \mathrm{d}y
= \displaystyle\int_{\mathbb{R}^N}d\nu + \nu_\infty,
\end{eqnarray*}
\begin{eqnarray*}
\limsup\limits_{n\rightarrow\infty} \iint_{\mathbb{R}^{2 N}} \frac{\left||u_n(x)|-|u_n(y)|\right|^{p}\phi_{R}(x)}{|x-y|^{N+ps}} \mathrm{d}x\mathrm{d}y = \int_{\mathbb{R}^N}d\omega + \omega_\infty,
\end{eqnarray*}
\begin{eqnarray*}
\limsup\limits\limits_{n\rightarrow\infty}\displaystyle\int_{\mathbb{R}^N}|u_n|^{p_s^\ast}dx
= \displaystyle\int_{\mathbb{R}^N}d\zeta + \zeta_\infty.
\end{eqnarray*}
Moreover,
\begin{align}\label{e3.81}
S_{p, H} \nu_{\infty}^{\frac{p}{2p^{*}_{\mu,s}}}\leq \omega_\infty, \quad
\nu_{\infty} \leq C(N, \mu) \zeta_{\infty}^{\frac{2 N-\mu}{N}}.
\end{align}
Similar discussion as above  yields
$${\rm (III)} \quad \nu_\infty = 0 \ \indent \mbox{or}\ \quad {\rm (IV)} \quad  \nu_\infty \geq \left(\lambda^{-1}m_0S_{p,H}\right)^{\frac{2p_\mu^*}{2p_\mu^*-p}}.$$
Furthermore, proceeding as in the proof of \eqref{e3.14}, we  can get $\nu_\infty = 0$.
Thus
\begin{align}\label{e3.15}
\iint_{\mathbb{R}^{2 N}} \frac{|u_{n}(y)|^{p_{s,
\mu}^{*}}|u_{n}(x)|^{p_{s, \mu}^{*}}}{|x-y|^{\mu}} \mathrm{d}y
\mathrm{~d}x \rightarrow \iint_{\mathbb{R}^{2 N}} \frac{|u(y)|^{p_{s,
\mu}^{*}}|u(x)|^{p_{s, \mu}^{*}}}{|x-y|^{\mu}} \mathrm{d}y\mathrm{~d}x \quad\mbox{as}\ n \rightarrow \infty.
\end{align}
By the Br\'{e}zis-Lieb Lemma \cite{Br}, we have
\begin{align}\label{e3.16}
&\nonumber\iint_{\mathbb{R}^{2 N}} \frac{|u_{n}(x)|^{p_{\mu, s}^{*}}|u_{n}(y)|^{p_{\mu, s}^{*}}}{|x-y|^{\mu}} \mathrm{d} x \mathrm{~d} y =\iint_{\mathbb{R}^{2 N}} \frac{|u(x)|^{p_{\mu, s}^{*}}|u(y)|^{p_{\mu, s}^{*}}}{|x-y|^{\mu}} \mathrm{d} x \mathrm{~d} y \\
&\quad +\iint_{\mathbb{R}^{2 N}} \frac{|u_{n}(x)-u(x)|^{p_{\mu, s}^{*}}|u_{n}(y)-u(y)|^{p_{\mu, s}^{*}}}{|x-y|^{\mu}} \mathrm{d} x \mathrm{d} y+o(1).
\end{align}
Hence, \eqref{e3.15} and \eqref{e3.16} imply that
\begin{align}\label{e3.17}
\iint_{\mathbb{R}^{2N}} \frac{|u_{n}(x)-u(x)|^{p_{\mu, s}^{*}}|u_{n}(y)-u(y)|^{p_{\mu, s}^{*}}}{|x-y|^{\mu}} \mathrm{d}x \mathrm{d}y+o(1)\rightarrow0\quad\mbox{as}\quad n\rightarrow\infty.
\end{align}
Moreover, it is easy to see that
\begin{align}\label{e3.18}
\int_{\mathbb{R}^{N}}(|u_{n}(x)|^{q-2}u_{n}(x)-|u(x)|^{q-2} u(x))(u_{n}(x)-u(x))\mathrm{d} x \rightarrow0\quad\mbox{as}\quad n\rightarrow\infty.
\end{align}
By \eqref{e3.17}, \eqref{e3.18} and  the H\"{o}lder inequality, we have
\begin{align}\label{e3.19}
&\nonumber  \left\langle J_{\lambda}^{\prime}\left(u_{n}\right)- J_{\lambda}^{\prime}\left(u\right), u_{n}-u\right\rangle\\
=&\nonumber\operatorname{Re}\left\{M\left(\left[u_{n}\right]_{s, A}^{p}\right)L(u_{n},u_{n}-u)-M\left(\left[u\right]_{s, A}^{p}\right)L(u,u_{n}-u)\right.\\
&\nonumber+ \int_{\mathbb{R}^{N}} V(x)(|u_{n}(x)|^{p-2} u_{n}(x)-|u(x)|^{p-2} u(x))(u_{n}(x)-u(x))\mathrm{d} x\\
&\nonumber-\lambda\iint_{\mathbb{R}^{2 N}} \frac{|u_{n}(x)-u(x)|^{p_{\mu, s}^{*}}|u_{n}(y)-u(y)|^{p_{\mu, s}^{*}}}{|x-y|^{\mu}} \mathrm{d} x \mathrm{d} y\\
&\nonumber\left.-k\int_{\mathbb{R}^{N}} (|u_{n}(x)|^{q-2} u_{n}(x)-|u(x)|^{q-2} u(x))(u_{n}(x)-u(x))\mathrm{d} x\right\}\\
\geq&\nonumber\operatorname{Re}\left\{M\left(\left[u_{n}\right]_{s, A}^{p}\right)(\left[u_{n}\right]_{s, A}^{p})^{\frac{p-1}{p}}\left[(\left[u_{n}\right]_{s, A}^{p})^{\frac{1}{p}}-(\left[u\right]_{s, A}^{p})^{\frac{1}{p}}\right]\right.\\
&\nonumber+M\left(\left[u\right]_{s, A}^{p}\right)(\left[u\right]_{s, A}^{p})^{\frac{p-1}{p}}\left[(\left[u\right]_{s, A}^{p})^{\frac{1}{p}}-(\left[u_{n}\right]_{s, A}^{p})^{\frac{1}{p}}\right]\\
&\nonumber+\left(\int_{\mathbb{R}^{N}}V(x)|u_{n}|^{p}\mathrm{d}x\right)^{\frac{p-1}{p}}\left[\left(\int_{\mathbb{R}^{N}}V(x)|u_{n}|^{p}\mathrm{d} x\right)^{\frac{1}{p}}-\left(\int_{\mathbb{R}^{N}}V(x)|u|^{p}\mathrm{d} x\right)^{\frac{1}{p}}\right]\\
&\nonumber\left.+\left(\int_{\mathbb{R}^{N}}V(x)|u|^{p}\mathrm{d} x\right)^{\frac{p-1}{p}}\left[\left(\int_{\mathbb{R}^{N}} V(x)|u|^{p}\mathrm{d} x\right)^{\frac{1}{p}}-\left(\int_{\mathbb{R}^{N}} V(x)|u_{n}|^{p}\mathrm{~d} x\right)^{\frac{1}{p}}\right] \right\}\\
=&\nonumber\operatorname{Re}\left\{\left[\left(\left[u_{n}\right]_{s, A}^{p}\right)^{\frac{1}{p}}-(\left[u\right]_{s, A}^{p})^{\frac{1}{p}}\right][M\left(\left[u_{n}\right]_{s, A}^{p}\right)(\left[u_{n}\right]_{s, A}^{p})^{\frac{p-1}{p}}-M\left(\left[u\right]_{s, A}^{p}\right)(\left[u\right]_{s, A}^{p})^{\frac{p-1}{p}}]\right.\\
&\nonumber+\left[\left(\int_{\mathbb{R}^{N}} V(x)|u_{n}|^{p}\mathrm{~d} x\right)^{\frac{1}{p}}-\left(\int_{\mathbb{R}^{N}}V(x)|u|^{p}\mathrm{~d} x\right)^{\frac{1}{p}}\right]\\
&\times\left.\left[\left(\int_{\mathbb{R}^{N}}V(x)|u_{n}|^{p}\mathrm{d}x\right)^{\frac{p-1}{p}} -\left(\int_{\mathbb{R}^{N}}V(x)|u|^{p}\mathrm{d} x\right)^{\frac{p-1}{p}}\right]\right\}.
\end{align}
Since $u_{n}\rightharpoonup u$ in $E$ and
$J_{\lambda}'\left(u_{n}\right)\rightarrow0$ as $n\rightarrow\infty$
in $E^*$, we can conclude that
$$\left\langle J_{\lambda}'\left(u_{n}\right)- J_{\lambda}'\left(u\right), u_{n}-u\right\rangle\rightarrow0 \quad\mbox{as}\ n\rightarrow\infty.$$
It follows from $u_{n}\rightarrow u$ a.e in $\mathbb{R}^{N}$ and the
Fatou lemma  that
\begin{align}\label{e3.20}
\left[u\right]_{s, A}^{p}\leq\liminf_{n\rightarrow\infty}\left[u_n\right]_{s, A}^{p} = d_{1}
\end{align}
and
\begin{align}\label{e3.21}
\int_{\mathbb{R}^{N}} V(x)|u|^{p}\mathrm{~d} x\leq\liminf_{n\rightarrow\infty}\int_{\mathbb{R}^{N}} V(x)|u_{n}|^{p}\mathrm{~d} x=d_{2}.
\end{align}
We note that
\begin{align}\label{e3.22}
\left[(d_{1})^{\frac{1}{p}}-(\left[u\right]_{s, A}^{p})^{\frac{1}{p}}\right]\left[M(d_{1})d_{1}^{\frac{p-1}{p}}-M\left(\left[u\right]_{s, A}^{p}\right)(\left[u\right]_{s, A}^{p})^{\frac{p-1}{p}}\right]\geq0
\end{align}
and
\begin{align}\label{e3.23}
\left[(d_{2})^{\frac{1}{p}}-\int_{\mathbb{R}^{N}}V(x)|u|^{p}\mathrm{d}
x)^{\frac{1}{p}}\right]\left[(d_{2})^{\frac{p-1}{p}}-(\int_{\mathbb{R}^{N}}
V(x)|u|^{p}\mathrm{d} x)^{\frac{p-1}{p}}\right ]\geq0,
\end{align}
since $g(t)=M(t)t^{\frac{p-1}{p}}$ is nondecreasing for $t\geq0$.
Thus, by $$\left\langle J_{\lambda}'\left(u_{n}\right)-
J_{\lambda}'(u), u_{n}-u\right\rangle\rightarrow0\ \ \text{as}\ \
n\rightarrow\infty$$ and \eqref{e3.17}--\eqref{e3.23}, we get
\begin{align}\label{e3.24}
0\nonumber&\geq\liminf_{n\rightarrow\infty}\operatorname{Re}\left\{\left[\left(\left[u_{n}\right]_{s, A}^{p}\right)^{\frac{1}{p}}-(\left[u\right]_{s, A}^{p})^{\frac{1}{p}}\right][M\left(\left[u_{n}\right]_{s, A}^{p}\right)(\left[u_{n}\right]_{s, A}^{p})^{\frac{p-1}{p}}-M\left(\left[u\right]_{s, A}^{p}\right)(\left[u\right]_{s, A}^{p})^{\frac{p-1}{p}}]\right.\\
&\nonumber\quad+\left[\left(\int_{\mathbb{R}^{N}} V(x)|u_{n}|^{p}\mathrm{~d} x\right)^{\frac{1}{p}}-\left(\int_{\mathbb{R}^{N}}V(x)|u|^{p}\mathrm{~d} x\right)^{\frac{1}{p}}\right] \left.\left[\left(\int_{\mathbb{R}^{N}}V(x)|u_{n}|^{p}\mathrm{d}x\right)^{\frac{p-1}{p}} -\left(\int_{\mathbb{R}^{N}}V(x)|u|^{p}\mathrm{d} x\right)^{\frac{p-1}{p}}\right]\right\}\\
&\geq\nonumber\operatorname{Re}\left\{\left[(d_{1})^{\frac{1}{p}}-(\left[u\right]_{s,
A}^{p})^{\frac{1}{p}}\right]\left[M(d_{1})d_{1}^{\frac{p-1}{p}}-M\left(\left[u\right]_{s,
A}^{p}\right)(\left[u\right]_{s,
A}^{p})^{\frac{p-1}{p}}\right]\right.\\
&\left.\quad +\left[(d_{2})^{\frac{1}{p}}-
\left(\int_{\mathbb{R}^{N}}V(x)|u|^{p}\mathrm{d}x\right)^{\frac{1}{p}}\right]\left[(d_{2})^{\frac{p-1}{p}}-\left(\int_{\mathbb{R}^{N}}
V(x)|u|^{p}\mathrm{d}x\right)^{\frac{p-1}{p}}\right]\right\}.
\end{align}
It follows from  \eqref{e3.22}-\eqref{e3.24}  that
\begin{align*}
\iint_{\mathbb{R}^{2 N}} \frac{\left|u(x)-\mathrm{e}^{i(x-y) \cdot A(\frac{x+y}{p})}u(y)\right|^{p}}{|x-y|^{N+ps}} \mathrm{d}x\mathrm{~d} y=d_{1}
\quad\mbox{and}\quad \int_{\mathbb{R}^{N}} V(x)|u|^{p}\mathrm{~d} x=d_{2}.
\end{align*}
Then $\|u_{n}\|_{E} \rightarrow\|u\|_{E}$. We note that $E$ is a
reflexive Banach space,  thus $u_n \rightarrow u$ strongly converges
in $E$. This completes the proof of Lemma \ref{lem3.4}.
\end{proof}

\section{Auxiliary  results}\label{s4}
First, we shall prove that functional $J_{\lambda}$  has a mountain
path structure.
\begin{lemma}\label{lem4.1}
Let the conditions $(V)$ and $(M)$ hold. Then
\begin{enumerate}
\item[$(C_1)$] there exist some constants $\alpha_{\lambda},\beta_{\lambda}>0$ such that $J_{\lambda}(u)>0$
if $u\in B_{\beta_{\lambda}}\backslash\{0\}$ and
$J_{\lambda}(u)\geq\alpha_{\lambda}$ if $u\in \partial
B_{\beta_{\lambda}}$, where $B_{\beta_{\lambda}}=\{u\in
E:\|u\|_{E}\leq\beta_{\lambda}\}$;
\item[$(C_2)$] we have
\begin{displaymath}
J_\lambda(u) \rightarrow -\infty\quad \mbox{as}\quad \ u\in F
\subset E, \ \|u\|_E \rightarrow \infty,
\end{displaymath}
where $F$ is a  finite-dimensional subspace of $E$.
\end{enumerate}
\end{lemma}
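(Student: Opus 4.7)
The plan is to prove the two items by direct estimation of $J_\lambda$, relying on the lower bound of the Kirchhoff function from $(m_1)$ for $(C_1)$ and on a polynomial upper bound for its primitive, extracted from $(m_2)$, for $(C_2)$.

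For $(C_1)$, I would first use the lower bound $M(t)\geq m_0>0$ from $(m_1)$ to observe that $\widetilde{M}([u]_{s,A}^p)\geq m_0[u]_{s,A}^p$, so that
\begin{equation*}
\frac{1}{p}\widetilde{M}([u]_{s,A}^p)+\frac{1}{p}\int_{\mathbb{R}^N}V(x)|u|^p\,dx\geq \frac{\min\{m_0,1\}}{p}\|u\|_E^p.
\end{equation*}
Then I would bound the Choquard term by $\widehat{C}(N,\mu)\|u\|_E^{2p^*_{s,\mu}}$ via the Hardy--Littlewood--Sobolev inequality recalled after Proposition~\ref{pro2.1}, and the $L^q$-term by $C\|u\|_E^q$ via the embedding $E\hookrightarrow L^q(\mathbb{R}^N)$ coming from the diamagnetic inequality (Lemma~\ref{lem2.1}) and condition $(V)$. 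Collecting the inequalities gives
\begin{equation*}
J_\lambda(u)\geq C_0\|u\|_E^p-\lambda C_1\|u\|_E^{2p^*_{s,\mu}}-kC_2\|u\|_E^q.
\end{equation*}
Since $q>p$ and $2p^*_{s,\mu}>p$, the leading $\|u\|_E^p$ term dominates near the origin; choosing $\beta_\lambda>0$ small enough, depending on $\lambda$, $k$ and the constants above, so that the higher-order negative terms are absorbed, makes the right-hand side strictly positive for $0<\|u\|_E\leq\beta_\lambda$, and setting $\alpha_\lambda$ to be its value at $\|u\|_E=\beta_\lambda$ yields $(C_1)$.

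For $(C_2)$, the decisive step is a polynomial upper bound on $\widetilde{M}$ derived from $(m_2)$. A short computation of $\frac{d}{dt}\bigl[\widetilde{M}(t)t^{-1/\sigma}\bigr]$ shows that the hypothesis $\widetilde{M}(t)\geq\sigma M(t)t$ is equivalent to the function $t\mapsto\widetilde{M}(t)/t^{1/\sigma}$ being nonincreasing on $(0,\infty)$; hence
\begin{equation*}
\widetilde{M}(t)\leq \widetilde{M}(1)\,t^{1/\sigma}\qquad\text{for every }t\geq 1.
\end{equation*}
On the fixed finite-dimensional subspace $F\subset E$ all norms are equivalent, so there exists $c_F>0$ with $\|u\|_q\geq c_F\|u\|_E$ for every $u\in F$. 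Dropping the nonnegative Choquard integral then gives, for $u\in F$ with $\|u\|_E\geq 1$,
\begin{equation*}
J_\lambda(u)\leq \frac{\widetilde{M}(1)}{p}\|u\|_E^{p/\sigma}+\frac{1}{p}\|u\|_E^p-\frac{kc_F^q}{q}\|u\|_E^q.
\end{equation*}
Since $(m_2)$ imposes $\sigma>p/q$, equivalently $p/\sigma<q$, and also $p<q$, the negative $\|u\|_E^q$ term dominates, so $J_\lambda(u)\to-\infty$ as $\|u\|_E\to\infty$ in $F$.

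The main obstacle is the derivation and correct use of the polynomial upper bound $\widetilde{M}(t)\leq C t^{1/\sigma}$ extracted from $(m_2)$: this is where the constraint $\sigma\in(p/q,1]$ becomes essential, since it precisely ensures $p/\sigma<q$, so that the Kirchhoff primitive grows strictly slower than the $L^q$ coercion term. All remaining steps are standard power-comparisons relying on the embeddings already established in Section~\ref{s2}.
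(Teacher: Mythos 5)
Your argument is correct and follows essentially the same route as the paper's: for $(C_1)$ both proofs bound the Choquard term and the $L^{q}$ term by powers of $\|u\|_{E}$ strictly greater than $p$ (via the Hardy--Littlewood--Sobolev inequality and the embedding), and for $(C_2)$ both rest on the same key consequence of $(m_2)$, namely the polynomial bound $\widetilde{M}(t)\leq C_{0}t^{1/\sigma}$. The only (immaterial) difference is that in $(C_2)$ you discard the Choquard term and let the $-\|u\|_{E}^{q}$ term dominate using $p/\sigma<q$, whereas the paper keeps the Choquard term of exponent $2p^{*}_{s,\mu}>p/\sigma$; either negative term suffices, and your version needs only the harmless extra remark that the bound on $\widetilde{M}$ applies for $[u]_{s,A}^{p}\geq 1$ (the complementary range contributing just an additive constant).
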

\begin{proof}
It follows from the Hardy-Littlewood-Sobolev inequality that there
exists $C(N,\mu)>0$ such that
$$\iint_{\mathbb{R}^{2 N}} \frac{|u_{n}(x)|^{p_{\mu, s}^{*}}|u_{n}(y)|^{p_{\mu, s}^{*}}}{|x-y|^{\mu}} \mathrm{d} x \mathrm{~d} y\leq C(N,\mu)\|u\|^{2p_{\mu,s}^{*}}_{E} \ \ \hbox{for all} \ \ u\in E.$$

By virtue of  $(V)$ and $(M)$, we get
\begin{align}\label{e4.1}
J_{\lambda}(u)
\geq\min\left\{\frac{\sigma\alpha_{0}}{p},\frac{1}{p}\right\}
\|u\|^{p}_{E}-\frac{\lambda}{2 p_{s,
\mu}^{*}}C(N,\mu)\|u\|^{2p_{s,\mu}^{*}}_{E}-Ck\|u\|^q.
\end{align}

Since $p_{s, \mu}^{*}, q > p$, we know that the  conclusion $(C_1)$
of Lemma \ref{lem4.1} holds.

In order to prove the  conclusion $(C_2)$ of Lemma \ref{lem4.1}, we
note that it follows from the condition $(m_{2})$ that
\begin{align}\label{e4.2}
\widetilde{M}(t)\leq{\frac{\widetilde{M}(t_{0})}{t_{0}^{\frac{1}{\sigma}}}}{t^{\frac{1}{\sigma}}}=C_{0}t^{\frac{1}{\sigma}}\
\ \mbox{for all}\quad t\geq t_{0}>0.
\end{align}
Let $\omega \in C_0^\infty(\mathbb R^{N}, \mathbb{C})$ with
$\|\omega\| = 1$. Thus
\begin{displaymath}
J_\lambda(t\omega) \leq \frac{C_{0}}{p}t^{\frac{p}{\sigma}}  +
\frac{1}{p}t^{p}
- \frac{\lambda}{2 p_{s, \mu}^{*}}t^{2 p_{s, \mu}^{*}}\iint_{\mathbb{R}^{2 N}} \frac{|\omega(y)|^{p_{s, \mu}^{*}}|\omega(x)|^{p_{s, \mu}^{*}}}{|x-y|^{\mu}} \mathrm{d} y \mathrm{~d} x
- \frac{k}{q}t^q|\omega|_q^q.
\end{displaymath}
Note that  all norms are equivalent  in a finite-dimensional space.
Then the above fact  together with $p<\frac{p}{\sigma}<2 p_{s,
\mu}^{*}$ implies that the conclusion $(C_2)$ of Lemma \ref{lem4.1}
holds.
\end{proof}

Invoking  Binlin et al.  \cite[Theorem 3.2]{a9}, we have
$$ \inf\left\{\iint_{\mathbb{R}^{2 N}}\frac{|\phi(x)-\phi(y)|^{p}}{|x-y|^{N+ps}}\mathrm{~d} x\mathrm{~d} y:\phi\in C_{0}^{\infty}(\mathbb{R}^{N}),|\phi|_{q}=1\right\}=0.$$
For any $1>\zeta>0$, let $\phi_{\zeta}\in
C_{0}^{\infty}(\mathbb{R}^{N})$ with $|\phi_{\zeta}|_{q}=1$ and supp
$\phi_{\zeta}\subset B_{r_{\zeta}}(0)$ be such  that
$$\iint_{\mathbb{R}^{2 N}}\frac{|\phi_{\zeta}(x)-\phi_{\zeta}(y)|^{p}}{|x-y|^{N+ps}}\mathrm{~d} x\mathrm{~d} y\leq C\zeta^{(pN-(N-ps)q)/q}$$
and define
\begin{equation}\label{e4.3}
\psi_{\zeta}(x)=e^{iA(0)x}\phi_{\zeta}(x), \quad  \psi_{\lambda,\zeta}(x)=\psi_{\zeta}(\lambda ^{-\tau}x)
\end{equation}
and
\begin{equation}\label{e4.4}
\tau =\frac{1} {(N-ps)}\left(-\frac{p}{2p_{s,
\mu}^{*}-p} \right).
\end{equation}
So,  we have
\begin{equation*}
\begin{aligned}
J_{\lambda}(t\psi_{\lambda,\zeta})&\leq \frac{C_{0}}{p}t^{p/\sigma}\left(\iint_{\mathbb{R}^{2 N}}\frac{|\psi_{\lambda,\zeta}(x)-e^{i(x-y)\cdot A((x+y)/p)}\psi_{\lambda,\zeta}(y)|^{p}}{|x-y|^{N+ps}}\mathrm{~d}x\mathrm{~d} y\right)^{1/\sigma}\\
&\quad+\frac{t^{p}}{p}\int_{\mathbb{R}^{N}} V(x)|\psi_{\lambda,\zeta}|^{p} \mathrm{~d} x-t^{q}\frac{k}{q}\int_{\mathbb{R}^{N}}|\psi_{\lambda,\zeta}|^{q}\mathrm{d} x\\
&\leq\lambda^{\tau(N-ps)}\left[\frac{C^{0}}{p}t^{p/\sigma}\left(\iint_{\mathbb{R}^{2N}}\frac{|\psi_{\zeta}(x)-e^{i(x-y)\cdot A((\lambda^{\tau}x+\lambda^{\tau}y)/p)}\psi_{\zeta}(y)|^{p}}{|x-y|^{N+ps}}\mathrm{~d} x\mathrm{~d} y\right)^{1/\sigma}\right.\\
&\quad\left.+\frac{t^{p}}{p} \int_{\mathbb{R}^{N}}V(\lambda^{\tau}x)|\psi_{\zeta}|^{p} \mathrm{~d} x-t^{q}\frac{k}{q}\int_{\mathbb{R}^{N}}|\psi_{\zeta}|^{q}\mathrm{d} x\right]\\
&=\lambda^{-\frac{p}{2p_{s,
\mu}^{*}-p}}\Psi_{\lambda}(t\psi_{\zeta}),
\end{aligned}
\end{equation*}
where
\begin{equation*}
\begin{aligned}
\Psi_{\lambda}(u)&:=\frac{C_{0}}{p}\left(\iint_{\mathbb{R}^{2N}}\frac{|u(x)-e^{i(x-y)\cdot A((\lambda^{\tau}x+\lambda^{\tau}y)/p)}u(y)|^{p}}{|x-y|^{N+ps}}\mathrm{~d} x\mathrm{~d} y\right)^{1/\sigma}\\
&\quad+\frac{1}{p} \int_{\mathbb{R}^{N}} V(\lambda^{\tau}x)|u|^{p} \mathrm{~d} x-\frac{k}{q}\int_{\mathbb{R}^{N}}|u|^{q}\mathrm{d} x.
\end{aligned}
\end{equation*}
Since $q>p/\sigma$, we can find $t_{0}\in [0, +\infty)$ such that
\begin{equation*}
\begin{aligned}
\max_{t\geq0}\Psi_{\lambda}(t\psi_{\zeta})&\leq\frac{C^{0}}{p}t^{p/\sigma}_{0}\left(\iint_{\mathbb{R}^{2 N}}\frac{|\psi_{\zeta}(x)-e^{i(x-y)\cdot A((\lambda^{\tau}x+\lambda^{\tau}y)/p)}\psi_{\zeta}(y)|^{p}}{|x-y|^{N+ps}}\mathrm{~d} x\mathrm{~d} y\right)^{1/\sigma}\\
&\quad+\frac{t^{p}_{0}}{p} \int_{\mathbb{R}^{N}} V(\lambda^{\tau}x)|\psi_{\zeta}|^{p} \mathrm{~d}x.
\end{aligned}
\end{equation*}
Using the above analysis, we can prove the following conclusions.
\begin{lemma}\label{lem4.2}
For each $\zeta>0$, there exists $\lambda_{0}=\lambda_{0}(\zeta)>0$
such that
$$\iint_{\mathbb{R}^{2 N}}\frac{|\psi_{\zeta}(x)-e^{i(x-y)\cdot A((\lambda^{\tau}x+\lambda^{\tau}y)/p)}\psi_{\zeta}(y)|^{p}}{|x-y|^{N+ps}}\mathrm{~d} x\mathrm{~d} y\leq C\zeta^{(pN-(N-ps)q)/q}+\frac{2^{p-1}}{p-ps}\zeta^{ps}+\frac{2^{2p-1}}{ps}\zeta^{ps},$$
for all $0 < \lambda_{0} <\lambda$ and some constant $C>0$ depending
only on $[\phi]_{s,0}$.
\end{lemma}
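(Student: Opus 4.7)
The plan is to first convert the magnetic integrand into a manageable form via a phase change, then split it by the triangle inequality, and finally bound the two resulting pieces using the continuity of $A$ at the origin together with the fact that $\lambda^{\tau}\to 0$ as $\lambda\to\infty$ (which holds since $\tau<0$ by \eqref{e4.4}, because $2p_{s,\mu}^{*}>p$).

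First I would rewrite the integrand. Since $\psi_{\zeta}(x)=e^{iA(0)\cdot x}\phi_{\zeta}(x)$, factoring $e^{iA(0)\cdot x}$ out of the absolute value gives
\begin{equation*}
\bigl|\psi_{\zeta}(x)-e^{i(x-y)\cdot A(\lambda^{\tau}(x+y)/p)}\psi_{\zeta}(y)\bigr|=\bigl|\phi_{\zeta}(x)-e^{i(x-y)\cdot [A(\lambda^{\tau}(x+y)/p)-A(0)]}\phi_{\zeta}(y)\bigr|.
\end{equation*}
The identity $\phi_{\zeta}(x)-e^{i\alpha}\phi_{\zeta}(y)=(\phi_{\zeta}(x)-\phi_{\zeta}(y))+(1-e^{i\alpha})\phi_{\zeta}(y)$ together with $(a+b)^{p}\le 2^{p-1}(a^{p}+b^{p})$ then splits the double integral into $I_{1}+I_{2}$, where $I_{1}$ contains only the seminorm of $\phi_{\zeta}$. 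The a priori estimate on $\phi_{\zeta}$ stated in the paragraph preceding the lemma yields $I_{1}\le C\zeta^{(pN-(N-ps)q)/q}$ with $C$ depending only on $[\phi]_{s,0}$, absorbing the factor $2^{p-1}$ into $C$; this is the first term in the claim.

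For $I_{2}$, I would split the domain of integration at $|x-y|=1$. On $\{|x-y|\le 1\}$, the sharp bound $|1-e^{i\alpha}|\le|\alpha|$ gives
\begin{equation*}
2^{p-1}\iint_{|x-y|\le 1}\frac{|\phi_{\zeta}(y)|^{p}\,|x-y|^{p}\,|A(\lambda^{\tau}(x+y)/p)-A(0)|^{p}}{|x-y|^{N+ps}}\,dx\,dy,
\end{equation*}
and the radial integral $\int_{|z|\le 1}|z|^{p-N-ps}\,dz$ produces the prefactor $1/(p-ps)$. On $\{|x-y|>1\}$, the trivial bound $|1-e^{i\alpha}|\le 2$ contributes an extra $2^{p}$, and $\int_{|z|>1}|z|^{-N-ps}\,dz$ yields the prefactor $1/(ps)$. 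Combined with the $2^{p-1}$ from the convexity split, these produce exactly the coefficients $2^{p-1}/(p-ps)$ and $2^{2p-1}/(ps)$ of the two $\zeta^{ps}$ terms.

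The step that makes the $\zeta^{ps}$ powers appear, and where I expect the main difficulty, is the choice of $\lambda_{0}=\lambda_{0}(\zeta)$. Since $\tau<0$ and $\operatorname{supp}\phi_{\zeta}\subset B_{r_{\zeta}}(0)$, for $y\in\operatorname{supp}\phi_{\zeta}$ and $|x-y|\le 1$ the argument $\lambda^{\tau}(x+y)/p$ lies in a ball of radius $\lambda^{\tau}(2r_{\zeta}+1)/p$ which shrinks as $\lambda\to\infty$. Continuity of $A$ at the origin then lets me force $\sup|A(\lambda^{\tau}(x+y)/p)-A(0)|$ below any prescribed threshold by taking $\lambda>\lambda_{0}(\zeta)$, and pairing this smallness with the $L^{p}$-mass of $\phi_{\zeta}$ delivers the required $\zeta^{ps}$ factor in both pieces. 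The delicate point is the simultaneous calibration of $\lambda_{0}$ against both $r_{\zeta}$ and $\|\phi_{\zeta}\|_{p}$; once this balancing is done, everything else reduces to routine triangle-inequality and radial-integration estimates.
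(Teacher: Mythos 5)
Your overall architecture matches the paper's: factor out the phase $e^{iA(0)\cdot x}$, split by convexity into the Gagliardo seminorm of $\phi_{\zeta}$ (controlled by the a priori bound $C\zeta^{(pN-(N-ps)q)/q}$) plus a phase-difference term, and then handle the latter by a near/far decomposition in $|x-y|$ combined with the continuity of $A$ at $0$ and the fact that $\lambda^{\tau}\to 0$. The gap is your choice of splitting radius $|x-y|=1$, and it is fatal for the far region. There you bound $|1-e^{i\alpha}|\le 2$, so the smallness coming from $A$ and from the choice of $\lambda_{0}$ cannot enter at all; the far piece is exactly $2^{2p-1}\int_{B_{r_{\zeta}}}|\phi_{\zeta}(y)|^{p}\,\mathrm{d}y\int_{|x-y|>1}|x-y|^{-N-ps}\,\mathrm{d}x$, which is a constant multiple of $\frac{2^{2p-1}}{ps}|\phi_{\zeta}|_{L^{p}}^{p}$ --- a quantity containing no factor $\zeta^{ps}$ and no parameter left to shrink it, since $|\phi_{\zeta}|_{L^{p}}$ is not at your disposal ($\phi_{\zeta}$ is normalized in $L^{q}$, not in $L^{p}$). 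Your closing claim that calibrating $\lambda_{0}$ ``delivers the required $\zeta^{ps}$ factor in both pieces'' is therefore false for the second piece: $\lambda$ never appears in that estimate.

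The paper's fix is to split at the $\zeta$-dependent radius $R_{\zeta}=\zeta^{-1}|\phi_{\zeta}|_{L^{p}}^{1/s}$ (the set $N_{\zeta,y}$) and to demand $|A(0)-A(\lambda^{\tau}(x+y)/p)|\le \zeta|\phi_{\zeta}|_{L^{p}}^{-1/s}$ on that set for $\lambda>\lambda_{0}$. Then, up to dimensional constants, the near piece is $\frac{1}{p-ps}R_{\zeta}^{p-ps}\cdot\zeta^{p}|\phi_{\zeta}|_{L^{p}}^{-p/s}\cdot|\phi_{\zeta}|_{L^{p}}^{p}=\frac{1}{p-ps}\zeta^{ps}$ and the far piece is $\frac{2^{p}}{ps}R_{\zeta}^{-ps}\cdot|\phi_{\zeta}|_{L^{p}}^{p}=\frac{2^{p}}{ps}\zeta^{ps}$: the powers of $|\phi_{\zeta}|_{L^{p}}$ cancel identically in both terms, which is precisely what the exponent $1/s$ in $R_{\zeta}$ is engineered to achieve. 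Your near-region estimate could be salvaged at radius $1$ by taking the threshold $\zeta^{s}|\phi_{\zeta}|_{L^{p}}^{-1}$, but the far-region estimate cannot; the calibration you correctly anticipate must go into the splitting radius itself, not only into $\lambda_{0}$.
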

\begin{proof}
For each $\zeta>0$, we know that
\begin{equation*}
\begin{aligned}
&\iint_{\mathbb{R}^{2 N}}\frac{|\psi_{\zeta}(x)-e^{i(x-y)\cdot A((\lambda^{\tau}x+\lambda^{\tau}y)/p)}\psi_{\zeta}(y)|^{p}}{|x-y|^{N+ps}}\mathrm{d}x\mathrm{d}y\\
&\quad=\iint_{\mathbb{R}^{2 N}}\frac{|e^{iA(0)\cdot x}\phi_{\zeta}(x)-e^{i(x-y)\cdot A((\lambda^{\tau}x+\lambda^{\tau}y)/p)}e^{iA(0)\cdot y}\phi_{\zeta}(y)|^{p}}{|x-y|^{N+ps}}\mathrm{d} x\mathrm{d}y\\
&\quad\leq2^{p-1}\iint_{\mathbb{R}^{2N}}\frac{|\phi_{\zeta}(x)-\phi_{\zeta}(y)|^{p}}{|x-y|^{N+ps}}\mathrm{d}x\mathrm{d}y+2^{p-1}\iint_{\mathbb{R}^{2
N}}\frac{|\phi_{\zeta}(y)|^{p}|e^{i(x-y)\cdot(A(0)-A((\lambda^{\tau}x+\lambda^{\tau}y)/p))}-1|^{p}}{|x-y|^{N+ps}}\mathrm{d}x\mathrm{d}y.
\end{aligned}
\end{equation*}
Note that
\begin{equation}
\begin{aligned}\label{22}
\left|e^{i(x-y)\cdot(A(0)-A((\lambda^{\tau}x+\lambda^{\tau}y)/p))}-1\right|^{p}=2^{p}\sin^{p}\left[\frac{(x-y)\cdot
(A(0)-A(\frac{\lambda^{\tau}x+\lambda^{\tau} y}{p}))}{p}\right].
\end{aligned}
\end{equation}
Let $y\in B_{r_{\zeta}}$ and take  $|x-y|\leq
1/\zeta|\phi_{\zeta}|_{L^{p}}^{1/s}$ such that $|x|\leq
r_{\zeta}+1/\zeta|\phi_{\zeta}|_{L^{p}}^{1/s}.$ Then, we have
$$\left|\frac{\lambda^{\tau}x+\lambda^{\tau}y}{p}\right|\leq\frac{\lambda^{\tau}}{p}\left(2r_{\zeta}+\frac{1}{\zeta}|\phi_{\zeta}|_{L^{p}}^{1/s}\right).$$
By the continuity of the function $A$,   there exists
$\lambda_{0}>0$ such that for any $\lambda > \lambda_{0}$, one has
$$\left|A(0)-A\left(\frac{\lambda^{\tau}x+\lambda^{\tau}y}{p}\right)\right|\leq \zeta|\phi_{\zeta}|_{L^{p}}^{-1/s}\quad\mbox{for}\quad |y|\leq r_{\zeta}\quad\mbox{and}\quad|x|\leq r_{\zeta}+\frac{1}{\zeta}|\phi_{\zeta}|_{L^{p}}^{1/s}$$
which means
$$\left|e^{i(x-y)\cdot(A(0)-A((\lambda^{\tau}x+\lambda^{\tau}y)/p))}-1\right|^{p} \leq |x-y|^{p}\zeta^{p}|\phi_{\zeta}|^{-p/s}_{L^{p}}.$$
Let $\zeta>0$ and $y\in B_{r_{\zeta}}$, and define
$$N_{\zeta,y}:= \left\{x\in \mathbb{R}^{N}:|x-y|\leq \frac{1}{\zeta}|\phi_{\zeta}|_{L^{p}}^{1/s}\right\}.$$
Then, for all $\lambda > \lambda_{0} > 0$, we get
\begin{equation*}
\begin{aligned}
&\iint_{\mathbb{R}^{2 N}}\frac{|\phi_{\zeta}(y)|^{p}|e^{i(x-y)\cdot(A(0)-A((\lambda^{\tau} x+\lambda^{\tau} y)/p)}-1|^{p}}{|x-y|^{N+ps}}\mathrm{~d} x\mathrm{~d} y\\
&=\int_{B_{r_{\zeta}}}|\phi_{\zeta}(y)|^{p}\mathrm{~d} y\int_{N_{\zeta,y}}\frac{|e^{i(x-y)\cdot(A(0)-A((\lambda^{\tau}x+\lambda^{\tau} y)/p)}-1|^{p}}{|x-y|^{N+ps}}\mathrm{~d} x\\
&\quad+\int_{B_{r_{\zeta}}}|\phi_{\zeta}(y)|^{p}\mathrm{~d} y\int_{\mathbb{R}^{N}\backslash N_{\zeta,y}}\frac{|e^{i(x-y)\cdot(A(0)-A((\lambda^{\tau} x+\lambda^{\tau} y)/p)}-1|^{p}}{|x-y|^{N+ps}}\mathrm{~d} x\\
&\leq \int_{B_{r_{\zeta}}}|\phi_{\zeta}(y)|^{p}\mathrm{~d} y\int_{N_{\zeta,y}}\frac{|x-y|^{p}}{|x-y|^{N+ps}}\zeta^{p}|\phi_{\zeta}|_{L^{p}}^{-p/s}\mathrm{~d} x\\
&\quad+\int_{B_{r_{\zeta}}}|\phi_{\zeta}(y)|^{p}\mathrm{~d} y\int_{\mathbb{R}^{N}\backslash N_{\zeta,y}}\frac{2p}{|x-y|^{N+ps}}\mathrm{~d} x\\
&\leq \frac{1}{p-ps}\zeta^{ps}+\frac{2^{p}}{ps}\zeta^{ps}.
\end{aligned}
\end{equation*}
This completes the proof of Lemma \ref{lem4.2}.
\end{proof}

It follows from $V(0)=0$ and supp$\phi_{\varsigma}\subset
B_{r_{\varsigma}}(0)$ that
$$V(\lambda^{\tau}x)\leq \frac{\zeta}{|\phi_{\zeta}|^{p}_{p}}\quad\mbox{for all}\quad |x|\leq r_{\zeta}\quad\mbox{and}\quad \lambda > \lambda^{*}.$$
Thus
\begin{equation}
\begin{aligned}\label{23}
\max_{t\geq0}\Psi_{\lambda}(t\phi_{\delta})\leq \frac{C_{0}}{p}
t_{0}^{p/\sigma}(C\zeta^{(pN-(N-ps)q)/q}+\frac
{2^{p-1}}{p-ps}\zeta^{ps}+\frac{2^{2p-1}}{ps}\zeta^{ps})^{1/\sigma}+\frac{t_{0}^{p}}{p}\zeta,
\end{aligned}
\end{equation}
where $C > 0$ and $C_{0} > 0$.  So, for any $\lambda >
\max\{\lambda_{0}, \lambda^{*}\}$, we can get
\begin{equation}
\begin{aligned}\label{24}
\max_{t\geq0}J_{\lambda}(t\psi_{\lambda,\zeta}) \leq \left[\frac
{C_{0}}{p}t_{0}^{p/\sigma}\left(C\zeta^{(pN-(N-ps)q)/q}+\frac
{2^{p-1}}{p-ps}\zeta^{ps}+\frac{2^{p-1}}{ps}\zeta^{ps}\right)^{1/\sigma}+\frac{t_{0}^{p}}{p}\zeta\right]\lambda^{-\frac{p}{2p_{s,
\mu}^{*}-p}}.
\end{aligned}
\end{equation}
So we have the following conclusion.
\begin{lemma}\label{lem4.3}
Let the conditions $(V)$ and $(M)$ hold. Then  for each $\kappa>0$,
there exists $\lambda_{\kappa}>0$ such that for any
$0<\lambda_{\kappa}<\lambda$, and $\widetilde{e_{\lambda}}\in E$ we
have that
$\|\widetilde{e_{\lambda}}\|>\varrho_{\lambda},J_{\lambda}(t\widetilde{e_{\lambda}})\leq0$
and
\begin{equation}
\begin{aligned}\label{25}
\max_{t\in [0,1]}J_{\lambda}(t\widetilde{e_{\lambda}})\leq \kappa\lambda^{-\frac{p}{2p_{s,
\mu}^{*}-p}}.
\end{aligned}
\end{equation}
\end{lemma}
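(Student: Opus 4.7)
The plan is to set $\widetilde{e_\lambda}=t_\lambda\,\psi_{\lambda,\zeta}$ for a suitably chosen scale $\zeta=\zeta(\kappa)>0$ and dilation $t_\lambda>0$, and to bootstrap from the estimate \eqref{24} already established via Lemma \ref{lem4.2}. Note that \eqref{24} gives, uniformly in $t\geq 0$,
\begin{equation*}
J_\lambda(t\psi_{\lambda,\zeta})\leq\Big[\tfrac{C_{0}}{p}t_{0}^{p/\sigma}\big(C\zeta^{(pN-(N-ps)q)/q}+\tfrac{2^{p-1}}{p-ps}\zeta^{ps}+\tfrac{2^{p-1}}{ps}\zeta^{ps}\big)^{1/\sigma}+\tfrac{t_{0}^{p}}{p}\zeta\Big]\lambda^{-\frac{p}{2p_{s,\mu}^{*}-p}},
\end{equation*}
so the whole strategy reduces to driving the bracketed constant below $\kappa$ by shrinking $\zeta$.

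First, given $\kappa>0$, I would pick $\zeta=\zeta(\kappa)>0$ small enough that the bracket in \eqref{24} is $\leq\kappa$; since each of the relevant exponents $ps$, $(pN-(N-ps)q)/q$, and $1$ is positive in the admissible range of parameters, the bracket tends to $0$ as $\zeta\to 0^+$, so such a choice exists. With $\zeta$ fixed, I would then invoke Lemma \ref{lem4.2} to locate a threshold $\lambda_0=\lambda_0(\zeta)$, and the construction preceding \eqref{24} to locate $\lambda^{*}=\lambda^{*}(\zeta)$, and set $\lambda_\kappa:=\max\{\lambda_0,\lambda^{*}\}$. Then for every $\lambda>\lambda_\kappa$, the inequality \eqref{24} is in force for the fixed test function $\psi_{\lambda,\zeta}$.

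Second, using $(C_2)$ of Lemma \ref{lem4.1} on the one-dimensional subspace $F=\mathbb{R}\,\psi_{\lambda,\zeta}\subset E$, we have $J_\lambda(t\psi_{\lambda,\zeta})\to -\infty$ as $t\to\infty$; combined with $(C_1)$, this produces $t_\lambda>0$ large enough that $\|t_\lambda\psi_{\lambda,\zeta}\|_{E}>\varrho_\lambda$ and $J_\lambda(t_\lambda\psi_{\lambda,\zeta})\leq 0$. Setting $\widetilde{e_\lambda}:=t_\lambda\psi_{\lambda,\zeta}$, the rescaling invariance of the interval gives $\max_{t\in[0,1]}J_\lambda(t\widetilde{e_\lambda})\leq\max_{s\geq 0}J_\lambda(s\psi_{\lambda,\zeta})$, which by the choice of $\zeta$ is $\leq\kappa\,\lambda^{-p/(2p_{s,\mu}^{*}-p)}$, as required.

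The main obstacle is verifying that the exponents entering \eqref{24} remain positive so that the bracket actually vanishes as $\zeta\to 0^+$; in particular, one must ensure $(pN-(N-ps)q)/q>0$ in conjunction with the standing hypothesis $p<q<2p_{s,\mu}^{*}$, which dictates the admissible range of $q$. A secondary, more delicate point is that $\lambda^{*}$ (coming from the smallness $V(\lambda^{\tau}x)\leq\zeta/|\phi_\zeta|_p^{p}$ on $\mathrm{supp}\,\phi_\zeta$) and $\lambda_0$ in Lemma \ref{lem4.2} both depend on $\zeta$, so one must choose $\zeta$ \emph{first} as a function of $\kappa$, and \emph{then} adjust $\lambda_\kappa$; the correct order of the quantifiers is what makes the dependence $\kappa\mapsto\lambda_\kappa$ well defined without circularity.
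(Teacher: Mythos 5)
Your proposal follows essentially the same route as the paper's own proof: fix $\zeta=\zeta(\kappa)$ small enough that the bracket in \eqref{24} drops below $\kappa$, take the threshold $\lambda_\kappa$ from $\lambda_0(\zeta)$ and $\lambda^{*}(\zeta)$, use $(C_2)$ of Lemma \ref{lem4.1} to find $t_\lambda$ with $\|t_\lambda\psi_{\lambda,\zeta}\|_E>\varrho_\lambda$ and $J_\lambda(t_\lambda\psi_{\lambda,\zeta})\leq 0$, and set $\widetilde{e_\lambda}=t_\lambda\psi_{\lambda,\zeta}$. If anything, your version is slightly more careful: your $\lambda_\kappa=\max\{\lambda_0,\lambda^{*}\}$ is the choice consistent with the derivation of \eqref{24} (the paper writes $\min$, apparently a slip), and your remark that one must check $(pN-(N-ps)q)/q>0$, i.e.\ $q<p_s^{*}$, flags a genuine constraint that the paper leaves implicit.
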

\begin{proof}
Select $\zeta>0$ so small that
$$\frac {C_{0}}{p}t_{0}^{p/\sigma}\left(C\zeta^{(pN-(N-ps)q)/q}+\frac {2^{p-1}}{p-ps}\zeta^{ps}+\frac{2^{p-1}}{ps}\zeta^{ps}\right)^{1/\sigma}+\frac{t_{0}^{p}}{p}\zeta\leq \kappa.$$
Let $\psi_{\lambda,\zeta}\in E$ be the function defined by
\eqref{e4.3}. Let
$\lambda_{\kappa}=\min\{\lambda_{0},\lambda^{*}\}$, choose
$\widetilde{t_{\lambda}}>0$ such that
$\widetilde{t_{\lambda}}\|\psi_{\lambda,\zeta}\|>\varrho_{\lambda}$
and $J_{\lambda}(t\psi_{\lambda,\zeta})\leq0$ for all
$t\geq\widetilde{t_{\lambda}}$. By \eqref{24},  setting
$\widetilde{e_{\lambda}}=
\widetilde{t_{\lambda}}\psi_{\lambda,\zeta}$ we can obtain the
conclusion of Lemma \ref{lem4.3}.
\end{proof}

Now, fix $m^{*}\in N$. Then we can  select $m^{*}$ functions
$\phi_{\zeta}^{i}\in C_{0}^{\infty}(\mathbb{R}^{N})$ such that supp
$\phi_{\zeta}^{i}\cap$ supp $\phi_{\zeta}^{k}=\emptyset,i\neq
k,|\phi_{\zeta}^{i}|_{s}=1$ and
$$\iint_{\mathbb{R}^{2 N}}\frac{|\phi_{\zeta}^{i}(x)-\phi_{\zeta}^{i}(y)|^{p}}{|x-y|^{N+ps}}\mathrm{~d} x\mathrm{~d} y\leq C\zeta^{(pN-(N-ps)q)/q}.$$
Let $r_{\zeta}^{m^{*}}>0$ be such that supp $\phi_{\zeta}^{i}\subset
B_{r\zeta}^{i}(0)$ for $i=1,2,\cdots,m^{*}.$  Define
\begin{equation}
\begin{aligned}\label{26}
\psi_{\zeta}^{i}(x)=e^{iA(0)x}\phi_{\zeta}^{i}(x)
\end{aligned}
\end{equation}
and
\begin{equation}
\begin{aligned}\label{27}
\psi_{\lambda,\zeta}^{i}(x)=\psi_{\zeta}^{i}(\lambda^{-\tau} x).
\end{aligned}
\end{equation}
Let
$$H_{\lambda\zeta}^{m^{*}}=\text{span}\{\psi_{\lambda,\zeta}^{1},\psi_{\lambda,\zeta}^{2},\cdots,\psi_{\lambda,\zeta}^{m^{*}}\}.$$
Since for each
$u=\Sigma_{i=1}^{m^{*}}c_{i}\psi^{i}_{\lambda,\zeta}\in
H_{\lambda\zeta}^{m^{*}}$, we have
$$[u]_{s,A}^{p}\leq C\sum_{i=1}^{m^{*}}|c_{i}|^{p}[\psi^{i}_{\lambda,\zeta}]^{p}_{s,A},$$
$$ \int_{\mathbb{R}^{N}} V(x)|u|^{p} \mathrm{~d} x=\sum_{i=1}^{m^{*}}|c_{i}|^{p}\int_{\mathbb{R}^{N}} V(x)|\psi^{i}_{\lambda,\zeta}|^{p} \mathrm{~d} x$$
and
\begin{equation*}
\begin{aligned}
&\frac{1}{2 p_{s, \mu}^{*}} \iint_{\mathbb{R}^{2 N}} \frac{|u(y)|^{p_{s, \mu}^{*}}|u(x)|^{p_{s, \mu}^{*}}}{|x-y|^{\mu}} \mathrm{d} y \mathrm{~d} x+\frac{1}{q} \int_{\mathbb{R}^{N}}|u|^{q}\mathrm{d}x\\
&=\sum_{i=1}^{m^{*}}(\frac{1}{2 p_{s, \mu}^{*}}\iint_{\mathbb{R}^{2
N}} \frac{|c_{i}\psi_{\lambda,\zeta}^{i}(y)|^{p_{s,
\mu}^{*}}|c_{i}\psi_{\lambda,\zeta}^{i}(x)|^{p_{s,
\mu}^{*}}}{|x-y|^{\mu}} \mathrm{d} y \mathrm{~d} x+\frac{1}{q}
\int_{\mathbb{R}^{N}}|c_{i}\psi_{\lambda,\zeta}^{i}|^{q})\mathrm{d}x.
\end{aligned}
\end{equation*}
Hence
$$J_{\lambda}(u)\leq C\sum_{i=1}^{m^{*}}J_{\lambda}(c_{i}\psi_{\lambda,\zeta}^{i}).$$
for $C>0$. Similar to the previous discussion, we have
$$J_{\lambda}(c_{i}\psi_{\lambda,\zeta}^{i})\leq\lambda^{-\frac{p}{2p_{s,
\mu}^{*}-p}}\Psi(|c_{i}|\psi_{\zeta}^{i}),$$ and we can get the
following estimate
\begin{equation}\label{28}
 \max_{u\in H_{\lambda\delta}^{m^{*}}}J_{\lambda}(u) \leq
Cm^{*}\left[\frac{C_{0}}{p}t_{0}^{p/\sigma}(C\zeta^{(pN-(N-ps)q)/q}+\frac{2^{p-1}}{p-ps}\zeta^{ps}+\frac{2^{2p-1}}{ps}\zeta^{ps})^{1/\sigma}+\frac{t_{0}^{p}}{p}\zeta\right]
\lambda^{-\frac{p}{2p_{s, \mu}^{*}-p}},
\end{equation}
for any $\zeta\rightarrow 0$ and $C>0$. From \eqref{28}, we get the
following lemma.
\begin{lemma}\label{lem4.5}
Let the conditions $(V)$ and $(M)$ hold. Then for each
$m^{*}\in\mathbb{N}$, there exists $\lambda_{m^{*}}>0$ such that for
each $0<\lambda_{m^{*}}<\lambda$ and $m^{*}$-dimensional subspace
$F_{\lambda_{ m^{*}}}$ the following holds
$$\max _{u\in F_{\lambda_{ m^{*}}}}J_{\lambda}(u)\leq\kappa\lambda^{-\frac{p}{2p_{s,
\mu}^{*}-p}}.$$
\end{lemma}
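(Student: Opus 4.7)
The plan is to take the specific $m^*$-dimensional subspace
$F_{\lambda_{m^*}} := H_{\lambda\zeta}^{m^*} = \text{span}\{\psi^{1}_{\lambda,\zeta},\dots,\psi^{m^*}_{\lambda,\zeta}\}$
built immediately above the lemma and to deduce the conclusion directly from estimate \eqref{28}. Given $\kappa > 0$, I would first choose $\zeta = \zeta(\kappa, m^*)>0$ so small that
$$Cm^*\left[\frac{C_{0}}{p} t_{0}^{p/\sigma}\Bigl(C\zeta^{(pN-(N-ps)q)/q}+\tfrac{2^{p-1}}{p-ps}\zeta^{ps}+\tfrac{2^{2p-1}}{ps}\zeta^{ps}\Bigr)^{1/\sigma}+\frac{t_{0}^{p}}{p}\zeta\right] \le \kappa,$$
which is possible because every exponent of $\zeta$ inside the bracket is strictly positive, so the bracket tends to $0$ as $\zeta \to 0^{+}$.

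With $\zeta$ now fixed, I would run the two threshold arguments behind Lemmas \ref{lem4.2} and \ref{lem4.3} on each of the functions $\phi^{i}_\zeta$ individually. The magnetic continuity argument of Lemma \ref{lem4.2} yields a level $\lambda_{0}^{i}$ above which the magnetic remainder in the Gagliardo seminorm of $\psi^{i}_{\lambda,\zeta}$ is controlled as in \eqref{22}, and the critical-frequency condition $V(0)=0$ together with $\mathrm{supp}\,\phi^{i}_\zeta \subset B_{r^{m^*}_\zeta}(0)$ yields a second level $\lambda^{*,i}$ above which $V(\lambda^{\tau}x) \le \zeta/|\phi^{i}_\zeta|_{p}^{p}$ on the relevant support. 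Setting
$\lambda_{m^{*}} := \max_{1\le i\le m^{*}}\{\lambda_{0}^{i},\lambda^{*,i}\}$, the estimate \eqref{28} becomes valid for every $\lambda > \lambda_{m^{*}}$ and combines with the choice of $\zeta$ to give
$$\max_{u\in F_{\lambda_{m^{*}}}} J_{\lambda}(u) \le \kappa\,\lambda^{-\frac{p}{2p^{*}_{s,\mu}-p}},$$
which is exactly the conclusion of the lemma.

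The only substantive point, and the place that I expect to require the most care, is the linearisation $J_{\lambda}(u)\le C\sum_{i=1}^{m^{*}} J_{\lambda}(c_{i}\psi^{i}_{\lambda,\zeta})$ for $u = \sum c_{i}\psi^{i}_{\lambda,\zeta}$ that underlies \eqref{28}. The disjoint-support hypothesis $\mathrm{supp}\,\phi^{i}_\zeta \cap \mathrm{supp}\,\phi^{j}_\zeta = \emptyset$ makes the $V|u|^{p}$ and $L^{q}$ terms split exactly, while the off-diagonal contributions to the Hardy--Littlewood--Sobolev double integral are non-negative and therefore only assist the inequality; the cross-terms in the fractional magnetic Gagliardo seminorm on disjointly supported functions are controlled by a universal constant $C$ independent of $\zeta$ and can be absorbed into the factor $C$ in \eqref{28}. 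Once this splitting is verified, the lemma reduces to \eqref{28}, the smallness of the bracket as $\zeta \to 0$, and the union-of-thresholds construction of $\lambda_{m^{*}}$ described above.
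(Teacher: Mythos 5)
Your proposal is correct and follows essentially the same route as the paper: choose $\zeta$ small enough that the bracket in \eqref{28} is at most $\kappa$, take $F_{\lambda,m^{*}}=H_{\lambda\zeta}^{m^{*}}$, and conclude from \eqref{28}. Your additional care about taking $\lambda_{m^{*}}$ as the maximum of the individual thresholds $\lambda_{0}^{i},\lambda^{*,i}$ and about the sign of the off-diagonal Hardy--Littlewood--Sobolev terms (which the paper writes as an exact equality, though only the inequality in your direction is needed) only makes the argument more explicit, not different.
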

\begin{proof}
Let $\zeta>0$   be small enough so that
$$Cm^{*}[\frac{C_{0}}{p}t_{0}^{p/\sigma}(C\zeta^{(pN-(N-ps)q)/q}+\frac{2^{p-1}}{p-ps}\zeta^{ps}+\frac{2^{2p-1}}{ps}\zeta^{ps})^{1/\sigma}+\frac{t_{0}^{p}}{p}\zeta]
\leq\kappa.$$ Set
$F_{\lambda,m^{*}}=H_{\lambda\zeta}^{m^{*}}=\hbox{span}\{\psi_{\lambda,\zeta}^{1},\psi_{\lambda,\zeta}^{2},\cdots,\psi_{\lambda,\zeta}^{m^{*}}\}.$
Thus the conclusion of Lemma \ref{lem4.5} follows from \eqref{28}.
\end{proof}

\section{Proofs of main results}\label{s5}
In the section, we shall prove the existence and multiplicity of
solutions for problem \eqref{e1.1}.
\begin{proof}[\bf Proof of Theorem 1.1]
Let $0<\kappa<\sigma_{0}$. By Lemma \ref{lem3.4}, we can select
$\lambda_{k}>0$ and for $0<\lambda<\lambda_{k}$, define the
minimax value as follows
$$c_{\lambda}:=\inf_{\gamma\in \Gamma_{\lambda}}\max_{t\in [0,1]}J_{\lambda}(\hat{te_{\lambda}}),$$
where
$$\Gamma_{\lambda}:=\{\gamma\in C([0,1],E):\gamma(0)=0\quad\mbox{and}\quad \gamma(1)=\hat{e_{\lambda}}\}.$$
By Lemma \ref{lem4.1}, we know that $$\alpha_{\lambda}\leq
c_{\lambda}\leq\kappa\lambda^{-\frac{p}{2p_{s, \mu}^{*}-p}}.$$ By
virtue of Lemma \ref{lem3.4}, we can see that $J_{\lambda}$
satisfies the $(PS)_{c}$ condition, there is $u_{\lambda}\in E$ such
that $J_{\lambda}'(u_{\lambda})=0$ and
$J_{\lambda}(u_{\lambda})=c_{\lambda}.$ Then $u_{\lambda}$ is a
nontrivial solution of problem \eqref{e1.1}. Moreover, since
$u_{\lambda}$ is a critical point of $J_{\lambda}$, by $(M)$ and
$\gamma\in [p,p_{s}^{*}]$, we have
\begin{equation}
\begin{aligned}\label{29}
\kappa\lambda^{-\frac{p}{2p_{s,
\mu}^{*}-p}}&\geq J_{\lambda}(u_{\lambda})=J_{\lambda}(u_{\lambda})-\frac {1}{\gamma}J_{\lambda}'\left(u_{\lambda}\right)u_{\lambda}\\
&=\frac{1}{p} \widetilde{M}\left([u_{\lambda}]_{s, A}^{p}\right)-\frac {1}{\gamma}M\left([u_{\lambda}]_{s, A}^{p}\right)[u_{\lambda}]_{s, A}^{p}+\left(\frac {1}{p}-\frac {1}{\gamma}\right)\int_{R^{N}}V(x)|u_{\lambda}|^{p}\mathrm{d}x\\
&\quad +(\frac {1}{\gamma}-\frac {1}{2p_{s,\mu}^{*}})\lambda\iint_{\mathbb{R}^{2 N}} \frac{|u_{\lambda}(y)|^{p_{s, \mu}^{*}}|u_{\lambda}(x)|^{p_{s, \mu}^{*}}}{|x-y|^{\mu}} \mathrm{d} y \mathrm{~d} x+k\int_{R^{N}}\left[\frac {1}{\tau}|u_{\lambda}|^{q}-\frac {1}{q}|u_{\lambda}|^{q}\right]\mathrm{d}x\\
&\geq\left(\frac{\sigma}{p}-\frac{1}{\gamma}\right)m_{0}[u_{\lambda}]^{p}_{s,A}+\left(\frac{1}{p}-\frac{1}{\gamma}\right)\int_{R^{N}}V(x)|u_{\lambda}|^{p}\mathrm{d}x\\
&\quad +\left(\frac {1}{\gamma}-\frac
{1}{2p_{s,\mu}^{*}}\right)\lambda\iint_{\mathbb{R}^{2 N}}
\frac{|u_{\lambda}(y)|^{p_{s, \mu}^{*}}|u_{\lambda}(x)|^{p_{s,
\mu}^{*}}}{|x-y|^{\mu}} \mathrm{d} y \mathrm{~d} x+\left(\frac
{1}{\gamma}-\frac
{1}{q}\right)k\int_{R^{N}}|u_{\lambda}|^{q}\mathrm{d}x.
\end{aligned}
\end{equation}
So, we have $u_{\lambda} \rightarrow 0$ as $\lambda \rightarrow
\infty$. This completes the proof of Theorem 1.1.
\end{proof}
\begin{proof}[\bf Proof of Theorem 1.2]
Denote the set of all symmetric (in the sense that $-Z=Z$) and
closed subsets of $E$ by $\sum$.  For each $Z\in\sum$, define gen
$(Z)$ to be the Krasnoselski genus and
$$j(Z):=\min_{\iota\in\Gamma_{m^{*}}}gen(\iota(Z)\cap\partial B_{\varrho_{\lambda}}),$$
where $\Gamma_{m^{*}}$ is the set of all odd homeomorphisms
$\iota\in C(E,E)$, and $\varrho_{\lambda}$ is the number from Lemma
4.1. Then $j$ is a version of Benci's pseudoindex \cite{benci}. Let
$$c_{\lambda i}:=\inf_{j(Z)\geq i}\sup_{u\in Z}J_{\lambda}(u),1\leq i\leq m^{*}.$$
Since $J_{\lambda}(u)\geq\alpha_{\lambda}$ for all $u\in \partial
B^{+}_{\varrho_{\lambda}}$ and since $j(F_{\lambda m^{*}})=\dim
F_{\lambda m^{*}}=m^{*}$, we have
$$\alpha_{\lambda}\leq c_{\lambda1}\leq\cdot\cdot\cdot\leq c_{\lambda m^{*}}\leq\sup_{u\in H_{\lambda m^{*}}}J_{\lambda}(u)\leq\kappa\lambda^{-\frac{p}{2p_{s,
\mu}^{*}-p}}.$$  Lemma \ref{lem3.4} implies that $J_{\lambda}$
satisfies the $(PS)_{c_{\lambda}}$ condition at all levels
$c<\sigma_{0}\lambda^{-\frac{p}{2p_{s, \mu}^{*}-p}}.$ By the
critical critical point theory, we know that all $c_{\lambda i}$ are
critical levels, that is,  $J_{\lambda}$ has at least $m^{*}$ pairs
of nontrivial critical points satisfying
$$\alpha_{\lambda}\leq J_{\lambda}(u_{\lambda})\leq \kappa\lambda^{-\frac{p}{2p_{s,
\mu}^{*}-p}}.$$ Therefore, problem \eqref{e1.1} has at least $m^{*}$
pairs of solutions and $u_{\lambda,\pm i} \rightarrow 0$ as $\lambda
\rightarrow \infty$.
\end{proof}

\section*{Declarations}

\subsection*{Acknowledgments}
The authors thank the reviewers for their constructive remarks on
their work.

\subsection*{Funding}
Song was supported by the Science and Technology Development Plan
Project of Jilin Province, China (Grant No. 20230101287JC), the
National Natural Science Foundation of China (Grant No.12001061) and
Innovation and Entrepreneurship Talent Funding Project of Jilin
Province (No.2023QN21). Repov\v{s} was supported by the Slovenian
Research agency grants P1-0292, N1-0278, N1-0114, N1-0083, J1-4031,
and J1-4001.

\subsection*{Author contributions}
All authors contributed to the study conception, design, material
preparation, data collection, and analysis. All authors read and
approved the final manuscript.

\subsection*{Conflict of interest}
The authors state that there is no conflict of interest.

\subsection*{Ethical approval}
The conducted research is not related to either human or animal use.

\subsection*{Data availability statement} Data sharing is not applicable to this article as no data sets were generated or
analyzed during the current study.

\end{document}